\def\RR{{\mathbb R}}
\def\gt{\widetilde{g}}
\def\rd{\overset{\circ}{Ric}}
\def\rdc{\overset{\circ}{R}}
\newcommand{\erre}{\mathds{R}}
\newcommand{\cinf}{C^{\infty}(M)}
\newcommand{\ricc}{\operatorname{Ric}}
\newcommand{\diver}{\operatorname{div}}
\newcommand{\ra}{\rightarrow}
\newcommand{\set}[1]{{\left\{#1\right\}}}               
\newcommand{\pa}[1]{{\left(#1\right)}}                  
\newcommand{\sq}[1]{{\left[#1\right]}}                  
\newcommand{\abs}[1]{{\left|#1\right|}}                 
\newcommand{\pair}[1]{\left\langle#1\right\rangle}      
\renewcommand{\hat}[1]{\widehat{#1}}
\renewcommand{\tilde}[1]{\widetilde{#1}}
\newtheorem{ackn}{Acknowledgments\!}
\newtheorem{theorem}{\textbf{Theorem}}[section]
\newtheorem{lemma}[theorem]{\textbf{Lemma}}
\newtheorem{proposition}[theorem]{\textbf{Proposition}}
\newtheorem{cor}[theorem]{\textbf{Corollary}}
\newtheorem{defi}[theorem]{\textbf{Definition}}
\newtheorem{rem}[theorem]{\textbf{Remark}}
\theoremstyle{remark}
\numberwithin{equation}{section}
\title[Four dimensional closed manifolds admit a weak harmonic Weyl metric]
{Four dimensional closed manifolds\\ admit a weak harmonic Weyl metric}
\author[G. Catino]{Giovanni Catino}
\address[Giovanni Catino]{Dipartimento di Matematica, Politecnico di Milano, Piazza Leonardo da Vinci 32, 20133 Milano, Italy}
\email[]{giovanni.catino@polimi.it}
\author[P. Mastrolia]{Paolo Mastrolia}
\address[Paolo Mastrolia]{Dipartimento di Matematica, Universit\`{a} degli Studi di Milano, Via Saldini 50, 20133 Italy.}
\email[]{paolo.mastrolia@unimi.it}
\author[D. D. Monticelli]{Dario D. Monticelli}
\address[Dario Monticelli]{Dipartimento di Matematica, Politecnico di Milano, Piazza Leonardo da Vinci 32, 20133 Milano, Italy}
\email[]{dario.monticelli@polimi.it}
\author[F. Punzo]{Fabio Punzo}
\address[Fabio Punzo]{Dipartimento di Matematica, Politecnico di Milano, Piazza Leonardo da Vinci 32, 20133 Milano, Italy}
\email[]{fabio.punzo@polimi.it}
\begin{document}


\begin{abstract} 
On four-dimensional closed manifolds we introduce a class of canonical Riemannian metrics, that we call {\em weak harmonic Weyl metrics}, defined  as critical points in the conformal class of a quadratic functional involving the norm of the divergence of the Weyl tensor. This class includes Einstein  and, more in general, harmonic Weyl manifolds. We prove that {\em every} closed four-manifold admits a weak harmonic Weyl metric, which is the unique (up to dilations) minimizer of the functional in a suitable conformal class. In general the problem is degenerate elliptic due to possible vanishing of the Weyl tensor. In order to overcome this issue, we minimize the functional in the conformal class determined by a reference metric, constructed by Aubin, with nowhere vanishing Weyl tensor. Moreover, we show that anti-self-dual metrics with positive Yamabe invariant can be characterized by pinching conditions involving suitable quadratic Riemannian functionals. 
\end{abstract}

\maketitle

\begin{center}

\noindent{\it Key Words: canonical metrics; four manifolds; weak harmonic Weyl metrics; Einstein metrics.}

\medskip

\centerline{\bf AMS subject classification:  53C20, 53C21, 53C25}

\end{center}

\


\section{Introduction}

Given a closed (i.e., compact without boundary) smooth manifold $M$, it is a natural problem to study canonical Riemannian metrics $g$ on $M$. Many of them can be defined as critical points of certain functionals defined on the space of metrics. Perhaps the most famous one is the {\em Einstein-Hilbert action}
$$
\mathfrak{S}(g):=\operatorname{Vol}_{g}(M)^{-\frac{n-2}{2}}\int_{M} R_{g} \,dV_{g}\,,
$$
where $\operatorname{Vol}_{g}(M)$ and $R_{g}$ denote the volume of $M$ and the scalar curvature of $g$, respectively. All stationary points of $\mathfrak{S}(g)$ are {\em Einstein metrics}, i.e. metrics whose Ricci curvature satisfies $\ricc_{g}=\lambda\, g$, for some $\lambda\in\RR$. While the existence of Einstein metrics as critical points of $\mathfrak{S}(g)$ is not guaranteed (for instance in dimension four due to topological restrictions \cite[Theorem 6.35]{besse}), a constrained version of the problem always admits a solution. More precisely, Yamabe, Aubin, Trudinger, and Schoen (see \cite{leepar}) showed that the {\em Yamabe invariant}
$$
\mathcal{Y}(M,[g]):=\inf_{\gt\in[g]} \mathfrak{S}(\gt)
$$
is always attained in the conformal class $[g]$. Moreover, every critical point in the conformal class of the normalized functional has constant scalar curvature.

In the last decades several curvature conditions generalizing Einstein metrics have been investigated by many authors (see for instance the classical Besse's book \cite{besse} and reference therein). In particular, important examples arise as critical points of functionals which are quadratic in the curvatures (see for instance \cite{cgy, guvia, and12}). In general, the associated Euler-Lagrange equation is of the fourth order in the metric, hence obtaining a satisfactory existence theory can be challenging. 

An important class of metrics which generalizes the Einstein condition is given by {\em harmonic Weyl metrics}, i.e. metrics with divergence-free Weyl tensor, $\delta_{g} W_{g}= 0$ (see again \cite{besse} and \cite{derd}). In fact, it is well known that all Einstein metrics have harmonic Weyl tensor and  that, on four dimensional closed manifolds, there are topological obstructions to the existence of harmonic Weyl metrics (see \cite{bou2tr, dershe}). 

From now on, let $M^4$ be a four-dimensional closed smooth manifold. Observe that all harmonic Weyl metrics are critical points of the quadratic scaling-invariant functional
$$
\mathfrak{D}(g):=\operatorname{Vol}_g(M)^{\frac{1}{2}} \int_M |\delta_{g} W_g|_g^2\, dV_g \,,
$$
while the viceversa in general does not hold. Note that conformal variations give rise to a second order Euler-Lagrange equation, since the transformation law of $\delta W$ (see \cite{besse}) is given by
$$
\delta_{\gt} W_{\gt} = \delta_g W - W_g (\nabla_g u, \cdot, \cdot, \cdot) 
$$ 
for every conformal metric $\gt = e^{2u} g \in [g]$. Thus, in the same spirit of the Yamabe problem, it seems natural to define the conformal invariant
$$
\mathcal{D}(M,[g]):=\inf_{\gt\in[g]}\mathfrak{D}(\gt)\,.
$$
The main purpose of this paper is to study the existence of minimizers in the conformal class for the functional $g\mapsto\mathfrak{D}(g)$. In general the problem is degenerate elliptic due to possible vanishing of the Weyl tensor. In order to overcome this issue, we minimize the functional in the conformal class determined by a reference metric, constructed by Aubin, with nowhere vanishing Weyl tensor. On the contrary, for the Yamabe problem the existence of minimizers is guaranteed in every conformal class.

Besides the aforementioned variational point of view, there is another geometric motivation for studying constrained critical points of $g\mapsto\mathfrak{D}(g)$. Indeed, it was proved by Derdzinski \cite{derd} that, on four manifolds, harmonic Weyl metrics satisfy the nice Weitzenb\"ock formula
\begin{equation}\label{weit}
\frac{1}{2}\Delta |W|^{2} = |\nabla W|^{2}+\frac{1}{2}R|W|^{2}-3\,W_{ijkl}W_{ijpq}W_{klpq} 
\end{equation}
(see the next section for the precise notation). On the other hand, Chang, Gursky and Yang \cite{cgy2} showed that, on {\em every} closed four-manifold $(M^{4},g)$, the following integral formula holds (see Corollary \ref{l-intid}):
\begin{equation}\label{intCGY}
\int_{M}\left(|\nabla W|^{2}-4|\delta W|^{2}+\frac{1}{2}R|W|^{2}-3\,W_{ijkl}W_{ijpq}W_{klpq} \right)\,dV = 0\,.
\end{equation}
A simple consequence is that, on a closed four-manifold $M^{4}$,
$$
\delta_g W_g = 0 \quad\quad\Longleftrightarrow \quad\quad \textrm{Equation \eqref{weit} holds on } (M^4,g)\,.
$$
In Section \ref{ssec-cm} we show that a metric is critical in the conformal class for the functional $g\mapsto\mathfrak{D}(g)$ if and only if it satisfies  the Weitzenb\"ock formula
\begin{equation}\label{eq-WHW}
\frac{1}{2}\Delta |W|^{2} = |\nabla W|^{2}+\frac{1}{2}R|W|^{2}-3\,W_{ijkl}W_{ijpq}W_{klpq}-8|\delta W|^{2}+\frac{4}{\operatorname{Vol}(M)}\int_M|\delta W|^{2}\,dV \,.
\end{equation}
Note that this equation reduces to \eqref{weit} if $\delta W=0$. Hence we are led to give the following

\begin{defi}\label{def_WHW} Let $M^{4}$ be a closed four-dimensional manifold. A Riemannian metric $g$ on $M^{4}$ is a {\em weak harmonic Weyl metric} if the Weitzenb\"ock equation \eqref{eq-WHW} holds on $(M^4,g)$.
\end{defi}
\noindent Clearly, harmonic Weyl metrics (and Einstein metrics) are weak harmonic Weyl metrics. We explicitly observe that integrating equation \eqref{eq-WHW} we obtain the identity  \eqref{intCGY} and this gives no {\em a priori} obstructions to the existence of weak harmonic Weyl metrics, contrary to what happens with  \eqref{weit}.

\smallskip

Our first main result is the following

\begin{theorem}\label{t-main} On every closed four-dimensional manifold there exists a weak harmonic Weyl metric.
\end{theorem}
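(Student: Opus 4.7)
The plan is to follow the direct method of the calculus of variations on the conformal class $[g_0]$ of Aubin's reference metric, and then derive the Weitzenb\"ock equation \eqref{eq-WHW} as the Euler--Lagrange equation. The two crucial structural facts are: (i) the Weyl tensor is conformally invariant as a $(1,3)$-tensor, so if $g_0$ is Aubin's metric with $|W_{g_0}|>0$ everywhere, then every representative $\tilde g=e^{2u}g_0\in[g_0]$ still has nowhere vanishing Weyl tensor; (ii) by the transformation law $\delta_{\tilde g}W_{\tilde g}=\delta_{g_0}W_{g_0}-W_{g_0}(\nabla u,\cdot,\cdot,\cdot)$ recalled in the introduction, restricting $\mathfrak D$ to $[g_0]$ produces a functional that is quadratic in $\nabla u$ with leading coefficient given by a pointwise quadratic form in $\nabla u$ built from $W_{g_0}$.

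First I would write $\mathfrak D(e^{2u}g_0)$ explicitly in terms of $u$, keeping track of the conformal weights of the volume form and of $|\cdot|_{\tilde g}$, so that the functional takes the schematic shape
$$
\mathfrak D(e^{2u}g_0)=\operatorname{Vol}_{\tilde g}(M)^{1/2}\int_M e^{-2u}\,\bigl|\delta_{g_0}W_{g_0}-W_{g_0}(\nabla u,\cdot,\cdot,\cdot)\bigr|_{g_0}^2\,dV_{g_0}.
$$
Using the scale invariance of $\mathfrak D$ I would normalize in the conformal class, for instance by fixing $\operatorname{Vol}_{\tilde g}(M)=1$ together with a normalization like $\int_M u\,dV_{g_0}=0$. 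The uniform positive lower bound $|W_{g_0}|\ge c>0$ on the compact manifold $M$ then yields a pointwise bound of the type $|W_{g_0}(\nabla u,\cdot,\cdot,\cdot)|_{g_0}^2\ge c^2|\nabla u|_{g_0}^2$ (up to a dimensional constant), which, combined with the Poincar\'e inequality and the normalization, makes the functional coercive on the affine subspace of $H^1(M,g_0)$ determined by the volume constraint.

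Next I would run the direct method: take a minimizing sequence $u_k$, pass to a weakly convergent subsequence in $H^1(M,g_0)$, use the lower semicontinuity of $\|W_{g_0}(\nabla u,\cdot,\cdot,\cdot)\|_{L^2}^2$ under weak convergence together with strong $L^2$ convergence of $e^{\pm 2u_k}$ (via Sobolev embedding in dimension four, controlling the exponential terms by Moser--Trudinger if needed, or more simply by compact embedding after verifying an $L^\infty$ bound on the minimizing sequence via the structure of the normalization), and conclude the existence of a minimizer $u^\star\in H^1(M,g_0)$. Computing the first variation along $u^\star+t\varphi$ with arbitrary $\varphi\in C^\infty(M)$ gives the weak form of the Euler--Lagrange equation; because $|W|$ is bounded below in $[g_0]$ the equation is uniformly elliptic in $u$, so elliptic regularity promotes $u^\star$ to a smooth solution. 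Finally, unwinding the first variation and using the Cauchy--Schwarz style commutation identities for $\nabla W$ together with the integral identity \eqref{intCGY} of Chang--Gursky--Yang, I would verify that the resulting equation is exactly the Weitzenb\"ock equation \eqref{eq-WHW}, so that $\tilde g^\star=e^{2u^\star}g_0$ is a weak harmonic Weyl metric.

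The main obstacle is the coercivity/compactness step for the minimizing sequence, because the functional involves $\int e^{-2u}|\nabla u|^2$-type terms whose control requires an honest $L^\infty$-bound on $u$ (not merely a Moser--Trudinger bound), and this is where the nowhere vanishing of $W_{g_0}$ is used most delicately: it gives a positive \emph{pointwise} lower bound on the quadratic form in $\nabla u$, independent of $u$, that converts the variational problem into a genuinely uniformly elliptic one. A secondary, essentially bookkeeping difficulty is the explicit identification of the Euler--Lagrange equation with \eqref{eq-WHW}: one must carefully integrate by parts the variation of the $|\delta W|^2$ term and absorb the volume constraint into the constant $\tfrac{4}{\operatorname{Vol}(M)}\int_M|\delta W|^2\,dV$ appearing in \eqref{eq-WHW}, which is precisely the Lagrange multiplier associated to the scaling normalization.
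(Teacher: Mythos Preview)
Your overall strategy---direct method in the conformal class of Aubin's metric, then identify the Euler--Lagrange equation with the Weitzenb\"ock formula \eqref{eq-WHW}---is exactly the paper's. But there is a real gap in the coercivity/compactness step, and you have correctly flagged it yourself without resolving it.

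In $u$-coordinates the functional has leading term $\int_M e^{-2u}\,\tfrac14|W|^2|\nabla u|^2\,dV$. This does \emph{not} control $\|u\|_{H^1}$: if $u\to+\infty$ on a region, the weight $e^{-2u}$ kills the gradient term. Your two normalizations ($\operatorname{Vol}_{\tilde g}=1$, i.e.\ $\int e^{4u}=1$, and $\int u=0$) are both scale-fixing and hence redundant rather than complementary; neither gives the $L^\infty$ bound you say you need, and Moser--Trudinger goes in the wrong direction (it presupposes an $H^1$ bound on $u$). The paper sidesteps all of this with the substitution $v:=e^{-u}$, under which $e^{-2u}|\nabla u|^2=|\nabla v|^2$ and the constraint becomes $\int v^{-4}=1$. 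The functional is then genuinely quadratic,
\[
\mathfrak D(v)=\Bigl(\int_M v^{-4}\Bigr)^{1/2}\int_M\bigl(a|\nabla v|^2+c\,v^2\bigr)\,dV,\qquad a=\tfrac14|W|^2>0,
\]
and coercivity in $H^1$ for $v$ follows from the positivity of the first eigenvalue $\lambda_1$ of $L=-\operatorname{div}(a\nabla\cdot)+c$ (which the paper checks), together with $a\ge\min a>0$.

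There is a second issue you do not address: even after obtaining a weak limit, one must show the minimizer is \emph{strictly} positive so that the conformal metric $v^{-2}g_0$ (and the singular right-hand side $v^{-5}$ in the Euler--Lagrange equation) makes sense. The paper handles this by proving that any minimizer satisfies $Lv\ge0$ weakly and then invoking a strong maximum principle for $L$; only then does elliptic regularity apply. Your sketch jumps from ``weak limit in $H^1$'' to ``elliptic regularity'' without this positivity step, which is where the argument would break down.
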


\noindent {\em Remarks:}

\begin{itemize} 

\item[1.]  Aubin \cite{aubin} proved that every closed Riemannian manifold admits a constant negative scalar curvature metric. Besides this one, to the best of our knowledge,  Theorem \ref{t-main} is the only existence result of a  {\em canonical} metric, which generalizes the Einstein condition, on {\em every} four-dimensional Riemannian manifold, without any topological obstructions. 
 
\item[2.] To be more precise, the metric in Theorem \ref{t-main} is constructed as follows: first, thanks to a result of Aubin \cite[Section 4]{aubin}, on every four-dimensional manifold $M^4$ we can choose a reference metric $g_0$ with $|W_{g_0}|_{g_0}>0$. Then, we prove that on $(M^4, g_0)$  the infimum $\mathcal{D}(M,[g_0])$ is attained by a conformal metric $g\in[g_0]$, which is a weak harmonic Weyl metric. Moreover, we show that every critical point in the conformal class $[g_0]$ is necessarily a minimum point.  

\item[3.] From both the geometric and the analytic point of view, it would be interesting to understand which conformal classes of metrics contain a weak harmonic Weyl representative.

\item[4.] We can also consider the (anti-)self-dual functional
$$
\mathfrak{D}^{\pm}(g):=\operatorname{Vol}_g(M)^{\frac{1}{2}} \int_M |\delta_{g} W^{\pm}_g|_g^2\, dV_g \,,
$$
and define {\em weak half harmonic Weyl metrics} its critical points in the conformal class. In particular we can prove that, given a closed four-manifold $(M^4, g_0)$ with $|W^{\pm}_{g_0}|_{g_0}>0$, there exists a weak half harmonic Weyl metric $\gt\in[g_{0}]$. However, we do not know if the aforementioned result by Aubin can be extended to the (anti-)self-dual Weyl tensor $W^{\pm}$.
\end{itemize}

\smallskip

In order to prove this theorem, we endow a closed four-manifolds $M^4$ with the metric $g_0$ constructed by Aubin and we consider the functional
$$
\mathfrak{D}(v):=\mathfrak{D}(v^{-2} g_0)=  \left(\int_M v^{-4} dV\right)^\frac{1}{2} \int_M \pa{\frac{1}{4}|W|^{2}|\nabla v|^{2}+\abs{\delta W}^2 v^{2} -  (v)^{2}_{s}\, W_{sijk}W_{pijk, p} }\,dV \,,
$$
where all the geometric quantities are referred to $g_0$ and the function $v$ belongs to the convex cone
$$
H(M):=\left\{u\in H^1(M): u>0 \,\,\text{a.e. and}\,\int_M u^{-4}\,dV<\infty \right\}\,.
$$ 
The condition $|W|>0$ is crucial, as it implies the uniform ellipticity of the problem. A variational argument, combined with some spectral analysis and maximum principles, shows that $u\mapsto\mathfrak{D}(u)$ admits a minimum point $v$ in $H(M)$. Consequently, $v$ is a (weak) solution of the Euler-Lagrange equation
$$
-\frac{1}{4}\diver(|W|^2 \nabla v) + \left(|\delta W|^2+\diver(W_{sijk}W_{pijk,p})\right)v = \mathfrak{D}(v)\left(\int_M v^{-4} dV\right)^{-3/2}\, \frac{1}{v^5} \,,
$$
which is a uniformly elliptic semilinear equation with singular nonlinearity. Here, again, all the geometric quantities are referred to $g_0$. Hence, by standard elliptic regularity theory, $v\in C^\infty (M)$ and 
$$
\mathcal{D}(M,[g_0]) = \min_{0<u\in C^{\infty}(M)} \mathfrak{D}(u) = \min_{u\in H(M)} \mathfrak{D}(u)\,.
$$
Therefore 
$$
g:= v^{-2} g_0
$$
is a weak harmonic Weyl metric on $M^4$.

\

In the second part of the paper, we prove a characterization of {\em anti-self-dual} four-dimensional manifolds, i.e. $(M^{4},g$) with $W_g^{+}\equiv 0$, assuming the positivity of the Yamabe invariant, a pinching condition on the conformal invariant
$$
\mathcal{W^{+}}(M, [g]):=\int_{M}|W^{+}_{g}|_{g}^{2}\,dV_{g}
$$
and the non-positivity of the modified functional
$$
\mathfrak{D}_{\alpha}^{+}(g) := \operatorname{Vol}_g(M)^{\frac12}\left( \int_M |\delta_{g} W_g^+|_g^2\, dV_g -\frac{5-9\alpha}{24}\int_{M}R_{g}|W^{+}_{g}|^{2}_{g}\,dV_{g}\right)\,,
$$
defined for a given $\alpha\in[0, \frac{5}{9}]$. In the same spirit we define $\mathfrak{D}_{\alpha}^{-}(g)$. Note that $\mathfrak{D}_{\frac59}^{\pm}(g)=\mathfrak{D}^{\pm}(g)$. More precisely, we have the following

\begin{theorem}\label{t-rigidity} Let $(M^{4},g)$ be a closed Riemannian manifold with positive Yamabe invariant $\mathcal{Y}(M,[g])>0$. Then $(M^{4}, g)$ is anti-self-dual, i.e. $W^{+}\equiv 0$, if and only if, there exists $\alpha\in[0, \frac{5}{9}]$ such that
\begin{equation*}
\mathcal{W^{+}}(M, [g]) \leq \frac{\alpha^{2}}{6} \mathcal{Y}(M,[g])^{2} \quad\quad\text{and}\quad\quad
\mathfrak{D}_{\alpha}^{+}(\gt)\leq 0 \quad\textrm{for some }\,\gt\in[g] \,.
\end{equation*}
\end{theorem}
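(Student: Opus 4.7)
\emph{Forward direction.} If $W^+\equiv 0$, choosing $\alpha=0$ and $\tilde g=g$ makes both conditions trivially satisfied.

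\emph{Reverse direction.} Since the vanishing of $W^+$ is conformally invariant, I would aim to show $W^+_{\tilde g}\equiv 0$ in the given representative $\tilde g$; all quantities below are computed with respect to $\tilde g$. The plan combines five ingredients. First, the self-dual part of the integral Weitzenb\"ock identity \eqref{intCGY}, namely
$$\int|\nabla W^+|^2\,dV=4\int|\delta W^+|^2\,dV-\tfrac12\int R|W^+|^2\,dV+3\int W^+_{ijkl}W^+_{ijpq}W^+_{klpq}\,dV.$$
Second, the sharp pointwise algebraic inequality $3\,W^+_{ijkl}W^+_{ijpq}W^+_{klpq}\le\tfrac{\sqrt 6}{2}|W^+|^3$, proven by diagonalizing $W^+$ as a traceless symmetric endomorphism of the rank-three bundle $\Lambda^+$ and maximizing the cubic form via Lagrange multipliers. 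Third, the hypothesis $\mathfrak{D}_\alpha^+(\tilde g)\le 0$, equivalent to $\int|\delta W^+|^2\le\tfrac{5-9\alpha}{24}\int R|W^+|^2$; since the left-hand side is non-negative, this automatically forces $\int R|W^+|^2\ge 0$ for every $\alpha<5/9$. Fourth, the Yamabe--Sobolev inequality $\mathcal Y(M,[g])\,\|f\|_{L^4}^2\le\int\bigl(6|\nabla f|^2+Rf^2\bigr)\,dV$, valid for every $f\in H^1(M)$ since $\mathcal Y(M,[g])>0$. Fifth, the refined Kato inequality $|\nabla|W^+||^2\le\tfrac35|\nabla W^+|^2$, applied to $|W^+|$ after the standard approximation near its zero set.

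Chaining the first three inputs gives $\int|\nabla W^+|^2\le\tfrac{2-9\alpha}{6}\int R|W^+|^2+\tfrac{\sqrt 6}{2}\int|W^+|^3$. Applying the fourth and fifth with $f=|W^+|$ yields $\mathcal Y\|W^+\|_{L^4}^2\le\tfrac{18}{5}\int|\nabla W^+|^2+\int R|W^+|^2$, while H\"older $\int|W^+|^3\le\bigl(\mathcal W^+(M,[g])\bigr)^{1/2}\|W^+\|_{L^4}^2$ combined with the pinching $\bigl(\mathcal W^+(M,[g])\bigr)^{1/2}\le\tfrac{\alpha}{\sqrt 6}\mathcal Y(M,[g])$ converts the cubic term into $\tfrac{\sqrt 6}{2}\int|W^+|^3\le\tfrac{9\alpha}{5}\int|\nabla W^+|^2+\tfrac{\alpha}{2}\int R|W^+|^2$. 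Assembling everything produces the key estimate
$$\tfrac{5-9\alpha}{5}\int|\nabla W^+|^2\le\tfrac{1-3\alpha}{3}\int R|W^+|^2\,dV.$$
The constants in the definitions of $\mathfrak{D}_\alpha^+$ and of the pinching are tuned precisely so that this estimate, combined with $\int R|W^+|^2\ge 0$ and the observation $\int R\,dV\ge\mathcal Y(M,[g])V^{1/2}>0$ (Yamabe--Sobolev applied to $f\equiv 1$), forces $W^+\equiv 0$. At $\alpha=0$ the pinching gives $\mathcal W^+=0$ outright; at $\alpha=1/3$ the right-hand side vanishes, so $W^+$ is parallel, $|W^+|$ is constant, and $\int R>0$ forces $|W^+|\equiv 0$; for $\alpha\in(1/3,5/9)$ the right-hand side becomes nonpositive while the left-hand side is nonnegative, yielding the same conclusion; at $\alpha=5/9$ one additionally exploits $\delta W^+=0$ obtained from $\mathfrak{D}^+_{5/9}(\tilde g)\le 0$.

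The main obstacles I anticipate are the verification of the sharp cubic algebraic bound with the exact constant $\tfrac{\sqrt 6}{2}$, on which the tuning of numerical constants depends; the approximation needed to apply refined Kato and Yamabe--Sobolev to the non-smooth function $|W^+|$ near its zero set; and closing the argument in the range $\alpha\in(0,1/3)$, where the coefficient $\tfrac{1-3\alpha}{3}$ is strictly positive and the key estimate alone does not immediately force $W^+\equiv 0$, so a more careful combination with the pinching $\mathcal W^+\le\tfrac{\alpha^2}{6}\mathcal Y^2$ and Yamabe--Sobolev is required.
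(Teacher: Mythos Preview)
Your overall strategy (Weitzenb\"ock identity, sharp cubic bound, Yamabe--Sobolev, H\"older, pinching) matches the paper's, but there is a genuine gap in step five: the refined Kato inequality $|\nabla|W^+||^2\le\tfrac35|\nabla W^+|^2$ is the Gursky--LeBrun inequality, and it is \emph{only valid when $\delta W^+\equiv 0$}. In your setting $\delta W^+$ is not assumed to vanish (indeed $\mathfrak{D}_\alpha^+(\tilde g)\le 0$ allows $\int|\delta W^+|^2>0$ whenever $\alpha<\tfrac59$), so this step fails. This is not a technicality: the coefficient $\tfrac{18}{5}$ you feed into the Yamabe--Sobolev estimate is precisely what makes your ``key estimate'' come out with the clean constants $\tfrac{5-9\alpha}{5}$ and $\tfrac{1-3\alpha}{3}$, and without the sharp Kato constant the inequality does not close. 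This is also exactly why you cannot finish in the range $\alpha\in(0,\tfrac13)$, as you yourself flag.

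The paper's resolution is a new \emph{quantitative} Kato inequality (Lemma~\ref{l-IKI}): for every $k\in[0,\tfrac53)$ and at points where $|W^+|\neq 0$,
\[
|\nabla W^+|^2\;\ge\;k\,\big|\nabla|W^+|\big|^2\;-\;\frac{8(k-1)}{5-3k}\,|\delta W^+|^2,
\]
with the $|\delta W^+|^2$ correction term replacing the harmonicity hypothesis. The paper then runs essentially your chain of inequalities but inserts this version with the specific choice $k=3\alpha$; the extra $|\delta W^+|^2$ term is absorbed, and one obtains directly $\mathfrak{D}_\alpha^+(g)\ge 0$ for \emph{every} $\alpha\in(0,\tfrac59)$ and every metric in the conformal class, with no separate treatment of subranges. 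The borderline equality case $\mathfrak{D}_\alpha^+(g)=0$ is then excluded by tracing equality through the Kato inequality (which forces a special eigenvalue structure on $W^+$) and computing a small $3\times 3$ determinant that cannot vanish for $\alpha\in(0,\tfrac13]$. So the missing ingredient in your proposal is precisely this parametrized Kato inequality; once you have it, your outline becomes the paper's proof.
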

\noindent The same result holds for the anti-self-dual part $W^{-}$ of the Weyl tensor. As a consequence we can prove the following lower bound for $\mathcal{W^{+}}(M, [g])$:

\begin{cor}\label{c-asf}  Let $(M^{4},g)$ be a closed Riemannian manifold with positive Yamabe invariant $\mathcal{Y}(M,[g])>0$. Suppose that there exists $\alpha\in[0, \frac{5}{9}]$ such that 
$$
\mathfrak{D}_{\alpha}^{\pm}(\gt)\leq 0 \quad\textrm{for some }\,\gt\in[g]\,.
$$
Then either $W_g^{\pm}\equiv 0$ or
$$
\mathcal{W}^{\pm}(M,[g]) > \left(\frac{3\alpha^{2}}{1+2\alpha^{2}}\right) \frac{16\pi^{2}}{3} \left(2 \chi(M)\pm 3 \tau(M)\right)\,,
$$
where $\chi(M)$ and $\tau(M)$ denote the Euler characteristic and the signature of $M$, respectively.
\end{cor}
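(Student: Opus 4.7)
The plan is to argue by contraposition on Theorem \ref{t-rigidity}, combined with a signature-adjusted Chern--Gauss--Bonnet identity in dimension four.

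If $W^{\pm}_{g}\equiv 0$ there is nothing to prove, so assume $W^{+}_{g}\not\equiv 0$ (the case $W^{-}$ follows by reversing the orientation of $M$, which swaps $W^{+}\leftrightarrow W^{-}$ and $\tau\leftrightarrow -\tau$). Under the hypothesis $\mathfrak{D}^{+}_{\alpha}(\tilde g)\leq 0$ for some $\tilde g\in[g]$, the contrapositive of Theorem \ref{t-rigidity} forces the strict inequality
\[
\mathcal{W}^{+}(M,[g]) \;>\; \frac{\alpha^{2}}{6}\,\mathcal{Y}(M,[g])^{2}.
\]

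Next, I would combine the Chern--Gauss--Bonnet formula in dimension four with the Hirzebruch signature theorem to isolate the self-dual part of the Weyl tensor, obtaining the well-known identity
\[
8\pi^{2}\bigl(2\chi(M)+3\tau(M)\bigr)=\int_{M}\!\left(|W^{+}|^{2}+\frac{R^{2}}{12}-|E|^{2}\right)dV,
\]
where $E:=\ricc-\tfrac{R}{4}g$ denotes the traceless Ricci tensor. Dropping the nonnegative term $|E|^{2}$ and evaluating the right-hand side on a Yamabe representative $\hat g\in[g]$---which has constant scalar curvature, so that $\int_{M}R_{\hat g}^{2}\,dV_{\hat g}=\mathcal{Y}(M,[g])^{2}$---together with the conformal invariance of $\int_{M}|W^{+}|^{2}\,dV$ in dimension four, yields
\[
8\pi^{2}(2\chi+3\tau)\;\leq\;\mathcal{W}^{+}(M,[g])+\frac{1}{12}\,\mathcal{Y}(M,[g])^{2}.
\]

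Substituting the strict lower bound $\mathcal{Y}(M,[g])^{2}<\tfrac{6}{\alpha^{2}}\,\mathcal{W}^{+}(M,[g])$ into the inequality above produces
\[
8\pi^{2}(2\chi+3\tau) \;<\; \mathcal{W}^{+}(M,[g])\cdot\frac{1+2\alpha^{2}}{2\alpha^{2}},
\]
which upon rearrangement becomes
\[
\mathcal{W}^{+}(M,[g]) \;>\; \left(\frac{3\alpha^{2}}{1+2\alpha^{2}}\right)\frac{16\pi^{2}}{3}\bigl(2\chi(M)+3\tau(M)\bigr),
\]
as claimed. I do not expect a serious obstacle: the argument is a rather direct consequence of Theorem \ref{t-rigidity}, and the only delicate point is the numerical bookkeeping in assembling the Gauss--Bonnet--signature identity so that $|W^{+}|^{2}$ appears with coefficient exactly $1$---this is precisely what makes the final constant $\tfrac{3\alpha^{2}}{1+2\alpha^{2}}$ come out cleanly. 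A second, minor subtlety is that two different conformal representatives enter the argument (the metric $\tilde g$ through Theorem \ref{t-rigidity} and the Yamabe metric $\hat g$ to control $\int R^{2}\,dV$), which is harmless because we only ever invoke quantities ($\mathcal{W}^{+}$, $\mathcal{Y}$, $\chi$, $\tau$) that are either conformally invariant or purely topological.
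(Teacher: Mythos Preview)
Your proof is correct and follows the same route as the paper: invoke the contrapositive of Theorem \ref{t-rigidity} to get $\mathcal{W}^{+}>\tfrac{\alpha^{2}}{6}\mathcal{Y}^{2}$, then combine with a Chern--Gauss--Bonnet/signature inequality relating $\mathcal{Y}^{2}$, $\mathcal{W}^{+}$ and $2\chi+3\tau$. The paper simply cites Gursky's lower bound (Lemma \ref{lem-gur}) for the second step, whereas you derive the needed inequality directly by evaluating $8\pi^{2}(2\chi+3\tau)=\int(|W^{+}|^{2}+\tfrac{R^{2}}{12}-|E|^{2})\,dV$ on a Yamabe representative and dropping $|E|^{2}$; this is a slightly more elementary argument that yields exactly the same conclusion. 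One trivial edge case to mention: your algebra divides by $\alpha^{2}$, so $\alpha=0$ should be disposed of separately (the claimed bound then reduces to $\mathcal{W}^{+}>0$, which is immediate).
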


\noindent Remarks:

\begin{itemize}

\item[1.] Gursky \cite{gur2} proved that, if $\delta W^{\pm}\equiv 0$, i.e. $\mathfrak{D}^{\pm}_{\frac{5}{9}}(g)\leq 0$, on a four manifolds $(M^{4},g)$ with positive Yamabe invariant, then either $W_g^{\pm}\equiv 0$ or
$$
\mathcal{W}^{\pm}(M,[g]) \geq \frac{16\pi^{2}}{3} \left(2 \chi(M)\pm 3 \tau(M)\right)\,.
$$
The conclusion in this case is stronger than the one in Corollary \ref{c-asf}. Ideed, the harmonic Weyl condition implies the validity of the pointwise Weitzenb\"ock formula \eqref{weit} which allows to conclude by using a clever Yamabe-type argument.

\item[2.] The same estimate as in Corollary \ref{c-asf} with $\alpha=\frac{1}{3}$ appeared in \cite[Theorem 4.1]{caotra}. The authors proved the lower bound on $\mathcal{W}^{\pm}(M,[g])$, assuming $(M^{4}, g)$ being a {\em gradient shrinking Ricci soliton} satisfying
$$
\int_M |\delta W^{\pm}|^2\,dV \leq \frac{1}{12}\int_M R|W^{\pm}|^2\,dV \,,
$$ 
i.e. $\mathfrak{D}^{\pm}_{\frac{1}{3}}(g)\leq 0$. Note that in Corollary \ref{c-asf} we do not assume any curvature condition on the Ricci tensor.

\item[3.] We conjecture that Theorem \ref{t-rigidity} and Corollary \ref{c-asf} hold only assuming one of the conformal invariant conditions
$$
\mathcal{D}_{\alpha}^{\pm}(M,[g]):=\inf_{\gt\in[g]}\mathfrak{D}_{\alpha}^{\pm}(\gt) \leq 0 \,.
$$
The conjecture would follow, if one can show that the infimum $\mathcal{D}_{\alpha}^{\pm}(M,[g])$ is achieved. This could be obtained by arguments similar to those used in the proof of Theorem \ref{t-main}, further assuming $|W^\pm_{\gt}|_{\gt}>0$ for some $\gt\in[g]$.  Without such extra condition, the functional and the associated Euler-Lagrange equation are {\em degenerate}, thus a completely different  analysis has to be performed.
\end{itemize}

\smallskip

The paper is organized as follows. Section \ref{sec-pre} is devoted to the geometric preliminaries: we give the relevant definitions, we prove a general Weitzenb\"ock formula for the Weyl tensor and we recall (and define) some Riemannian functionals. In Section \ref{sec-eul} we derive the Euler-Lagrange equation satisfied by critical points in the conformal class of the functional $g\mapsto\mathfrak{D}(g)$. The existence of solutions to the elliptic equation and the proof of Theorem \ref{t-main} are given in Section \ref{sec-ex}. Section \ref{sec-kato} is devoted to the proof of a new quantitative Kato inequality for the Weyl tensors $W^\pm$ which is used, in Section \ref{sec-rigidity}, to prove the rigidity results Theorem \ref{t-rigidity} and Corollary \ref{c-asf}.

\

\section{Preliminaries}\label{sec-pre}

The Riemann curvature
operator of an oriented Riemannian manifold $(M^n,g)$ is defined by
$$
\mathrm{R}(X,Y)Z=\nabla_{X}\nabla_{Y}Z-\nabla_{Y}\nabla_{X}Z-\nabla_{[X,Y]}Z\,.
$$
Throughout the article, the Einstein convention of summing over the repeated indices will be adopted. In a local coordinate system the components of the $(1, 3)$-Riemann
curvature tensor are given by
$R^{l}_{ijk}\tfrac{\partial}{\partial
  x^{l}}=\mathrm{R}\big(\tfrac{\partial}{\partial
  x^{j}},\tfrac{\partial}{\partial
  x^{k}}\big)\tfrac{\partial}{\partial x^{i}}$ and we denote  by $Riem$ its $(0,4)$ version with components by
$R_{ijkl}=g_{im}R^{m}_{jkl}$. The Ricci tensor is obtained by the contraction
$R_{ik}=g^{jl}R_{ijkl}$ and $R=g^{ik}R_{ik}$ will
denote the scalar curvature ($g^{ij}$ are the coefficient of the inverse of the metric $g$). Moreover, we will denote by $(\rd)_{ik}=\rdc_{ik}=R_{ik}-\frac{1}{n}R\, g_{ik}$ the traceless Ricci tensor. The so called Weyl tensor is then
defined by the following decomposition formula in dimension $n\geq 3$,
\begin{eqnarray}
\label{Weyl}
W_{ijkl}  & = & R_{ijkl} \, - \, \frac{1}{n-2} \, (R_{ik}g_{jl}-R_{il}g_{jk}
+R_{jl}g_{ik}-R_{jk}g_{il})  \nonumber \\
&&\,+\frac{R}{(n-1)(n-2)} \,
(g_{ik}g_{jl}-g_{il}g_{jk})\, \, .
\end{eqnarray}
The Weyl tensor shares the symmetries of the curvature
tensor. Moreover, as it can be easily seen by the formula above, all of its contractions with the metric are zero, i.e. $W$ is totally trace-free. In dimension three, $W$ is identically zero on every Riemannian manifold, whereas, when $n\geq 4$, the vanishing of the Weyl tensor is
a relevant condition, since it is  equivalent to the local
  conformal flatness of $(M^n,g)$. We also recall that in dimension $n=3$,  local conformal
  flatness is equivalent to the vanishing of the Cotton tensor
\begin{equation}\label{def_cot}
C_{ijk} =  R_{ij,k} - R_{ik,j}  -
\frac{1}{2(n-1)}  \big( R_k  g_{ij} -  R_j
g_{ik} \big)\,,
\end{equation}
where $R_{ij,k}=\nabla_k R_{ij}$ and $R_k=\nabla_k R$ denote, respectively, the components of the covariant derivative of the Ricci tensor and of the differential of the scalar curvature.
By direct computation, we can see that the Cotton tensor $C$
satisfies the following symmetries
\begin{equation}\label{CottonSym}
C_{ijk}=-C_{ikj},\,\quad\quad C_{ijk}+C_{jki}+C_{kij}=0\,,
\end{equation}
moreover it is totally trace-free,
\begin{equation}\label{CottonTraces}
g^{ij}C_{ijk}=g^{ik}C_{ijk}=g^{jk}C_{ijk}=0\,,
\end{equation}
by its skew--symmetry and Schur lemma. We recall that, for $n\geq 4$,  the Cotton tensor can also be defined as one of the possible divergences of the Weyl tensor:
\begin{equation}\label{def_Cotton_comp_Weyl}
C_{ijk}=\pa{\frac{n-2}{n-3}}W_{tikj, t}=-\pa{\frac{n-2}{n-3}}W_{tijk, t}=-\frac{n-2}{n-3} (\delta W)_{ijk} \,.
\end{equation}
A computation shows that the two definitions coincide (see e.g. \cite{besse}).

We say that a $n$-dimensional, $n\geq 3$, Riemannian manifold $(M^{n},g)$ is an {\em Einstein manifold} if the Ricci tensor satisfies
$$
\ricc \,=\, \lambda g \,,
$$
for some $\lambda\in\erre$. In particular $R=n\lambda \in\erre$ and the Cotton tensor $C$ vanishes. If $n\geq 4$, equation \eqref{def_Cotton_comp_Weyl} implies that the divergence of the Weyl tensor and thus of the Riemann tensor are identically null, i.e.
\begin{equation}\label{harmall}
W_{tijk,t} \,=\, 0\,, \quad R_{tijk,t}=0
\end{equation}
on every Einstein manifold. Manifolds satisfying these curvature conditions are said to have {\em harmonic Weyl curvature} or {\em harmonic curvature}, respectively. The Hessian $\nabla^{2}$ of some tensor $T$ of local components $T_{i_{1}\dots i_{k}}^{j_{1}\dots j_{l}}$ will be
$$
(\nabla^{2}T)_{pq}=\nabla_{q}\nabla_{p}T_{i_{1}\dots i_{k}}^{j_{1}\dots j_{l}}=T_{i_{1}\dots i_{k},pq}^{j_{1}\dots j_{l}}
$$
and similarly $\nabla^{k}$ for higher derivatives. The (rough) Laplacian of a tensor $T$ is given by $\Delta T_{i_{1}\dots i_{k}}^{j_{1}\dots j_{l}} =g^{pq}T_{i_{1}\dots i_{k},pq}^{j_{1}\dots j_{l}}$. The Riemannian metric induces norms on all the
 tensor bundles, and in coordinates the squared norm is given by
$$
|T|^{2}=g^{i_{1}m_{1}}\cdots g^{i_{k}m_{k}}
 g_{j_{1}n_{1}}\dots g_{j_{l}n_{l}} T_{i_{1}\dots
   i_{k}}^{j_{1}\dots j_{l}} T_{m_{1}\dots m_{k}}^{n_{1}\dots
   n_{l}}\,.
$$

\smallskip

\subsection{Dimension four} In this subsection we recall  some known identities involving the Weyl tensor on four dimensional Riemannian manifold. First we recall that, if $T=\{T_{ijkl}\}$ is a tensor with the same symmetries of the Riemann tensor  (algebraic curvature tensor), it defines a symmetric operator, $T:\Lambda^{2}\longrightarrow \Lambda^{2}$ on the bundle of two-forms $\Lambda^{2}$ by
\begin{equation}\label{conv}
(T \omega)_{kl} \,:=\,\frac{1}{2}T_{ijkl}\omega_{ij}\,,
\end{equation}
with $\omega\in\Lambda^{2}$. Hence we have that $\lambda$ is an eigenvalue of $T$ if $T_{ijkl}\omega_{ij} = 2\lambda\,\omega_{kl}$, for some $0\neq \omega\in\Lambda^{2}$; note that the operator norm on $\Lambda^{2}$ satisfies $\Vert T\Vert^{2}_{\Lambda^{2}}=\frac{1}{4}|T|^{2}$.

The key feature is that $\Lambda^{2}$, on an oriented Riemannian manifold of dimension four $(M^{4},g)$, decomposes as the sum of two subbundles $\Lambda^{\pm}$, i.e.
\begin{equation}\label{dec}
\Lambda^{2}=\Lambda^{+} \oplus \Lambda^{-}\,.
\end{equation}
These subbundles are by definition the eigenspaces of the Hodge operator
$$
\star:\Lambda^{2}\rightarrow \Lambda^{2}
$$
corresponding respectively to the eigenvalue $\pm 1$. In the literature, sections of $\Lambda^{+}$ are called {\em self-dual} two-forms, whereas sections of $\Lambda^{-}$ are called {\em anti-self-dual} two-forms. Now, since the curvature tensor $Riem$ may be viewed as a map $\mathcal{R}:\Lambda^{2}\ra\Lambda^{2}$, according to \eqref{dec} we have the curvature decomposition
\begin{displaymath}
\mathcal{R}=\left(\begin{array}{c|c}
W^{+}+\frac{R}{12}\,I & \overset{\circ}{Ric} \\
\hline
\overset{\circ}{Ric} & W^{-}+\frac{R}{12}\,I \end{array}\right), \end{displaymath}
where
$$
W = W^{+} + W^{-}
$$
and the self-dual and anti-self-dual $W^{\pm}$ are trace-free endomorphisms of $\Lambda^{\pm}$, $I$ is the identity map of $\Lambda^{2}$ and $\overset{\circ}{Ric}$ represents the trace-free Ricci curvature $Ric-\frac{R}{4}g$.

Following Derdzinski \cite{derd}, for $x\in M^{4}$, we can choose an oriented orthogonal basis $\omega^{+}, \eta^{+}, \theta^{+}$ (respectively, $\omega^{-}, \eta^{-}, \theta^{-}$) of $\Lambda^{+}_{x}$ (respectively, $\Lambda^{-}_{x}$), consisting of eigenvectors of $W^{\pm}$ such that $|\omega^{\pm}|=|\eta^{\pm}|=|\theta^{\pm}|=\sqrt{2}$ and, at $x$,
\begin{equation}\label{eq-derw}
W^{\pm} \,=\, \frac{1}{2}\big(\lambda^{\pm}\omega^{\pm}\otimes\omega^{\pm}+\mu^{\pm} \eta^{\pm}\otimes\eta^{\pm}+\nu^{\pm}\theta^{\pm}\otimes\theta^{\pm}\big)
\end{equation}
where $\lambda^{\pm}\leq\mu^{\pm}\leq\nu^{\pm}$ are the eigenvalues of $W^{\pm}_{x}$. Since $W^{\pm}$ are trace-free, one has $\lambda^{\pm}+\mu^{\pm}+\nu^{\pm}=0$. By definition, we have
$$
\Vert W^{\pm}\Vert^{2}_{\Lambda^{2}}=(\lambda^{\pm})^{2}+(\mu^{\pm})^{2}+(\nu^{\pm})^{2}=\frac{1}{4}|W^\pm|^2\,.
$$
We recall that the orthogonal basis $\omega^{\pm}, \eta^{\pm}, \theta^{\pm}$ forms a quaternionic structure on $T_{x}M$ (see \cite[Lemma 2]{derd}), namely in some local frame
$$
\omega^{\pm}_{ip}\omega^{\pm}_{pj}\,=\,\eta^{\pm}_{ip}\eta^{\pm}_{pj}\,=\,\theta^{\pm}_{ip}\theta^{\pm}_{pj}\,=\,-\delta_{ij}\,,
$$
$$
\omega^{\pm}_{ip}\eta^{\pm}_{pj}=\theta^{\pm}_{ij},\quad\quad \eta^{\pm}_{ip}\theta^{\pm}_{pj}=\omega^{\pm}_{ij},\quad\quad \theta^{\pm}_{ip}\omega^{\pm}_{pj}=\eta^{\pm}_{ij}\,.
$$

The following identity on the Weyl tensor in dimension four is known (see \cite{derd})

\begin{equation}\label{eq-ww}
W^\pm_{ijkt}W^\pm_{ijkl} = \frac{1}{4}|W^\pm|^{2}g_{tl} = \Vert W^\pm\Vert^{2}_{\Lambda^{2}}g_{tl}\,.
\end{equation}

As far as the covariant derivative of Weyl is concerned, it can be shown that (see again \cite{derd}), locally, one has
\begin{align}\label{eq-derder}
2 \nabla W^{\pm} &= \big(d\lambda^{\pm}\otimes\omega^{\pm}+(\lambda^{\pm}-\mu^{\pm})c^{\pm}\otimes\eta^{\pm}+(\nu^{\pm}-\lambda^{\pm})b^{\pm}\otimes\theta^{\pm}\big)\otimes\omega^{\pm}\\\nonumber
&+\big(d\mu^{\pm}\otimes\eta^{\pm}+(\lambda^{\pm}-\mu^{\pm})c^{\pm}\otimes\omega^{\pm}+(\mu^{\pm}-\nu^{\pm})a^{\pm}\otimes\theta^{\pm}\big)\otimes\eta^{\pm}\\\nonumber
&+\big(d\nu^{\pm}\otimes\theta^{\pm}+(\nu^{\pm}-\lambda^{\pm})b^{\pm}\otimes\omega^{\pm}+(\mu^{\pm}-\nu^{\pm})a^{\pm}\otimes\eta^{\pm}\big)\otimes\theta^{\pm}\,,
\end{align}
for some one forms $a^{\pm},b^{\pm},c^{\pm}$.
By orthogonality,  we get
$$
\Vert\nabla W\Vert^2_{\Lambda^{2}} = \Vert\nabla W^+\Vert^2_{\Lambda^{2}} +  \Vert\nabla W^-\Vert^2_{\Lambda^{2}}
$$
and
\begin{equation}\label{eq-nqder}
\Vert\nabla W^{\pm}\Vert^{2}_{\Lambda^{2}} = |d\lambda^{\pm}|^{2}+|d\mu^{\pm}|^{2}+|d\nu^{\pm}|^{2}  +2(\mu^{\pm}-\nu^{\pm})^{2}|a^{\pm}|^{2}+2(\lambda^{\pm}-\nu^{\pm})^{2}|b^{\pm}|^{2}+ 2(\lambda^{\pm}-\mu^{\pm})^{2}|c^{\pm}|^{2} \,,
\end{equation}
where $\Vert\nabla W^{\pm}\Vert^{2}_{\Lambda^{2}}=\frac{1}{4}|\nabla W^\pm|^2$. 

Finally, the following identity holds (see \cite{catmas})

\begin{lemma}\label{l-urka}
   On every $n$-dimensional, $n\geq 4$, Riemannian manifold one has
  \begin{equation*}
    W_{ijkl, t}W_{ijkt, l} = \frac{1}{2}\abs{\nabla W}^2 - \frac{1}{n-3}\abs{\delta W}^2.
  \end{equation*}
  In particular, on a four manifold  one has
   \begin{equation}\label{GradWeylNormEinstein}
   W^\pm_{ijkl, t}W^\pm_{ijkt, l} = \frac{1}{2}\abs{\nabla W^\pm}^2 - \abs{\delta W^\pm}^2.
  \end{equation}
\end{lemma}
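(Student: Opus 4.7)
The plan is to apply the second Bianchi identity to the Weyl tensor, derive from it a Bianchi-type identity for the cyclic sum
$$B_{ijklm}:=W_{ijkl,m}+W_{ijlm,k}+W_{ijmk,l}$$
expressed in terms of $\delta W$, and then evaluate $W_{ijkl,t}B_{ijklt}$ in two different ways.

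From $R_{ij[kl,m]}=0$ together with the Weyl decomposition \eqref{Weyl} and the identity \eqref{def_Cotton_comp_Weyl}, a direct calculation (which is the main technical step) yields
$$B_{ijklm} = \frac{1}{n-3}\Big[g_{jl}(\delta W)_{ikm}-g_{jk}(\delta W)_{ilm}-g_{il}(\delta W)_{jkm}+g_{ik}(\delta W)_{jlm}+g_{jm}(\delta W)_{ilk}-g_{im}(\delta W)_{jlk}\Big].$$
As a sanity check, contracting this with $g^{im}$ and using the total trace-freeness of $\delta W$ recovers $(\delta W)_{jkl}$, as expected.

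Set $X:=W_{ijkl,t}W_{ijkt,l}$. Expanding $B$ via its definition,
$$W_{ijkl,t}B_{ijklt}=|\nabla W|^2+W_{ijkl,t}W_{ijlt,k}+W_{ijkl,t}W_{ijtk,l}=|\nabla W|^2-2X,$$
where the antisymmetry $W_{ijtk,l}=-W_{ijkt,l}$ handles the third summand and a swap of summation indices $k\leftrightarrow l$ combined with the same antisymmetry handles the second. On the other hand, plugging in the explicit formula for $B$, the first four of the six resulting terms vanish because the Weyl tensor is totally trace-free (every contraction $g^{ab}W_{ijkl,t}$ with $a,b\in\{i,j,k,l\}$ is zero), whereas the last two terms, using $g^{jt}W_{ijkl,t}=-(\delta W)_{ikl}$ and $g^{it}W_{ijkl,t}=(\delta W)_{jkl}$ together with the antisymmetry of $\delta W$ in its last two slots, each contribute $\tfrac{1}{n-3}|\delta W|^2$. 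Hence $W_{ijkl,t}B_{ijklt}=\tfrac{2}{n-3}|\delta W|^2$, and equating the two expressions produces the claim.

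For the half-Weyl identity \eqref{GradWeylNormEinstein} in dimension four, note that the decomposition $\Lambda^2=\Lambda^+\oplus\Lambda^-$ is preserved by $\nabla$ (since $\nabla$ commutes with the Hodge star), so $\nabla W^\pm$ and $\delta W^\pm$ lie in the corresponding $(\pm)$-subbundles and cross-pairings such as $W^+_{ijkl,t}W^-_{ijkt,l}$ vanish by orthogonality. The norms $|\nabla W|^2$, $|\delta W|^2$ and the tensor $B$ itself decompose additively according to the $(\pm)$ splitting, so the entire argument applies verbatim to $W^\pm$ separately with $n=4$ (hence $\tfrac{1}{n-3}=1$), yielding \eqref{GradWeylNormEinstein}. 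The main obstacle is securing the explicit formula for $B$ with the correct signs; once that is in hand, every remaining manipulation is routine and relies only on the antisymmetries and total trace-freeness of $W$ and $\delta W$.
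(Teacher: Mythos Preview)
Your proof of the general-$n$ identity is correct and is the standard route (the paper itself does not prove this lemma but refers to \cite{catmas}). The Bianchi-type formula for $B_{ijklm}$ is derived correctly from the Weyl decomposition and the Cotton--divergence relation \eqref{def_Cotton_comp_Weyl}, and both evaluations of $W_{ijkl,t}B_{ijklt}$ are handled cleanly.

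For the half-Weyl identity \eqref{GradWeylNormEinstein} there is a small imprecision. The phrase ``the entire argument applies verbatim to $W^\pm$'' implicitly uses a separate identity $B^\pm=F(\delta W^\pm)$, where $F$ denotes your six-term expression; this does not follow from the additive splitting $B=B^++B^-$ alone, since $F(\delta W^+)$ need not be self-dual in the $(ij)$-pair. The fix is immediate with the orthogonality you already invoke: contract $\nabla W^+$ against the \emph{full} $B=F(\delta W)$. The four trace terms still vanish by trace-freeness of $W^+$, while the two divergence terms yield $\langle\delta W^+,\delta W\rangle=|\delta W^+|^2$ each (using the $(kl)$-pair orthogonality of $\delta W^\pm$), so $W^+_{ijkl,t}B_{ijklt}=2|\delta W^+|^2$. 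Since $W^+_{ijkl,t}B^-_{ijklt}=0$ by $(ij)$-pair orthogonality, one obtains $W^+_{ijkl,t}B^+_{ijklt}=2|\delta W^+|^2$; combined with $W^+_{ijkl,t}B^+_{ijklt}=|\nabla W^+|^2-2X^+$ (which does go through verbatim), this gives \eqref{GradWeylNormEinstein}.
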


\smallskip

\subsection{A general Weitzenb\"ock formula for the Weyl tensor} In this subsection, we prove that on every $n$-dimensional Riemannian manifold, $n\geq 4$, the Weyl tensor satisfies a nice Weitzenb\"ock formula. Namely we have

\begin{lemma}\label{l-intidGenDim} Let $(M^{n},g)$, $n\geq 4$, be a $n$-dimensional  Riemannian manifold. Then
\[
\frac{1}{2}\Delta|W|^{2}=\abs{\nabla W}^2-2\pa{\frac{n-2}{n-3}}\abs{\delta W}^2 +2R_{pq}W_{pikl}W_{qikl}-3W_{ijkl}W_{ijpq}W_{klpq}-2\pa{W_{ijkl}C_{jkl}}_i  \,.
\]
\end{lemma}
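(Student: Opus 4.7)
The plan is to start from the pointwise identity
$$\frac{1}{2}\Delta|W|^2 = |\nabla W|^2 + W_{ijkl}\,\Delta W_{ijkl},$$
and rewrite the term $W_{ijkl}\Delta W_{ijkl}$ as a sum of the three kinds of pieces that appear on the right-hand side of the claimed formula: a Ricci--Weyl--Weyl quadratic trace, a cubic Weyl contraction, and derivatives of Weyl packaged as $|\delta W|^2$ and the divergence $(W_{ijkl}C_{jkl})_i$.

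First, I would differentiate the decomposition \eqref{Weyl} and apply the second Bianchi identity $R_{ijkl,m}+R_{ijlm,k}+R_{ijmk,l}=0$ to get the ``Weyl--Bianchi'' identity
$$W_{ijkl,m}+W_{ijlm,k}+W_{ijmk,l}=T_{ijklm},$$
where $T_{ijklm}$ is an explicit combination of Cotton-tensor components (coming from the cyclic derivatives of the Schouten part of \eqref{Weyl}, which assemble into the Cotton combinations visible in \eqref{def_cot}). Contracting in $m$ with $\nabla_m$ and using the antisymmetries of $W$ (relabelling $k\leftrightarrow l$ shows the two non-Laplacian terms are equal) gives
$$\Delta W_{ijkl}=2\,\nabla_m\nabla_k W_{ijlm}+\nabla_m T_{ijklm}.$$
Then I would commute the derivatives in $\nabla_m\nabla_k W_{ijlm}=\nabla_k\nabla_m W_{ijlm}+[\nabla_m,\nabla_k]W_{ijlm}$; the first term is a derivative of $\delta W$, which by \eqref{def_Cotton_comp_Weyl} is a derivative of $C$, so that after multiplying by $W_{ijkl}$ and regrouping with $W_{ijkl}\nabla_m T_{ijklm}$ the pure differential contribution produces exactly $-2\bigl(\tfrac{n-2}{n-3}\bigr)|\delta W|^2-2(W_{ijkl}C_{jkl})_i$, using $W_{ijkl,i}$ itself as a Cotton tensor through \eqref{def_Cotton_comp_Weyl}.

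For the commutator term $W_{ijkl}[\nabla_m,\nabla_k]W_{ijlm}$ I would expand as a sum of four Riemann--Weyl contractions, one per index of $W$. The contraction in the index that gets summed produces a Ricci--Weyl--Weyl trace; the remaining three are Riemann--Weyl--Weyl terms. In each of these three, I would substitute the Ricci decomposition \eqref{Weyl} for the Riemann factor. The scalar-curvature part vanishes because $W$ is totally trace-free, the Ricci part assembles (via \eqref{eq-ww}-style contractions and relabellings) with the first piece into the single term $+2R_{pq}W_{pikl}W_{qikl}$, and the pure Weyl part gives, after applying the first Bianchi identity of $W$ to identify the three possible cubic contractions, the contribution $-3\,W_{ijkl}W_{ijpq}W_{klpq}$.

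I expect the main obstacle to be the index bookkeeping, not any deep geometric input: making the three a priori distinct cubic pairings $W_{ijkl}W_{ijpq}W_{klpq}$, $W_{ijkl}W_{ipkq}W_{jplq}$, and so on collapse via the algebraic Bianchi identity to a single expression with the precise coefficient $-3$ is the most delicate step, together with tracking the sign and factor $(n-2)/(n-3)$ coming out of \eqref{def_Cotton_comp_Weyl}. Lemma \ref{l-urka} provides a built-in consistency check: any residual mixed term of the form $W_{ijkl,t}W_{ijkt,l}$ produced along the way can be converted into the correct combination of $|\nabla W|^2$ and $|\delta W|^2$, which must be consistent with the factor appearing in the final identity.
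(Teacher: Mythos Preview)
Your proposal is correct and follows essentially the same route as the paper: compute $\Delta W_{ijkl}$ from the differentiated second Bianchi identity for $W$ (with its Cotton correction $T$), commute the second covariant derivatives to isolate a $\nabla(\delta W)$ piece plus curvature commutators, decompose the Riemann factors via \eqref{Weyl}, and finally contract with $W_{ijkl}$ using the first Bianchi identity to collapse the cubic terms. The paper quotes the commutation formula for $\nabla^2 W$ and the resulting expression for $\Delta W_{ijkl}$ from \cite{catmasmonrig} rather than expanding everything by hand as you plan to, but the strategy and the organization of the terms are the same; your remark that the only real difficulty is the bookkeeping (the coefficient $-3$ on the cubic term and the factor $(n-2)/(n-3)$ from \eqref{def_Cotton_comp_Weyl}) is accurate. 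One small caveat: your claim that the two non-Laplacian terms in the traced Bianchi identity are equal by relabelling $k\leftrightarrow l$ holds only after contraction with $W_{ijkl}$ (or antisymmetrization in $k,l$), not at the level of $\Delta W_{ijkl}$ itself---but since you immediately contract, this causes no problem.
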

\begin{proof}
  From the second Bianchi identity for the Weyl tensor (see for instance \cite{cgy2, catmasmonrig}) we have
  \[
  -W_{klij, mm}-W_{klmi, jm}+W_{mjkl, im} = \Psi_{ijkl},
  \]
  where
  \[
  \Psi_{ijkl}:=\frac{1}{n-2}\pa{C_{ljm, m}\delta_{ki}+C_{lmi, m}\delta_{kj}+C_{lij, k}-C_{kjm, m}\delta_{li}-C_{kmi, m}\delta_{lj}-C_{kij, l}}.
  \]
  The previous relation can be rewritten as
  \begin{align*}
    W_{klij, mm} &= \pa{W_{mjkl, mi}-W_{mikl, mj}} - \Psi_{ijkl} +\pa{W_{klmj, im}-W_{klmj, mi}} - \pa{W_{klmi, jm}-W_{klmi, mj}}\\ &=\pa{\frac{n-3}{n-2}}\pa{C_{ikl, j}-C_{jkl, i}} - \Psi_{ijkl} +\pa{W_{klmj, im}-W_{klmj, mi}} - \pa{W_{klmi, jm}-W_{klmi, mj}}.
  \end{align*}
  Using the commutation relation for the second covariant derivative of the Weyl tensor (see \cite{catmasmonrig}) to expand the two terms $W_{klmj, im}-W_{klmj, mi}$ and $W_{klmi, jm}-W_{klmi, mj}$, and also the first Bianchi identity for $W$, we deduce
  \begin{align*}
  \Delta W_{ijkl}&=\pa{\frac{n-3}{n-2}}\pa{C_{ikl, j}-C_{jkl, i}} - \Psi_{ijkl} \\&+R_{ip}W_{pjkl}-R_{jp}W_{pikl}-2\pa{W_{ipjq}W_{pqkl}-W_{ipql}W_{jpqk}+W_{ipqk}W_{jpql}}\\&+\frac{1}{n-2}\sq{R_{jp}W_{pikl}-R_{ip}W_{pjkl}-R_{lp}\pa{W_{pikj}-W_{pjki}}-R_{kp}\pa{W_{pjli}-W_{pilj}}} \\&+\frac{1}{n-2}R_{pq}\pa{W_{piql}\delta_{kj}-W_{pjql}\delta_{ki}+W_{pikq}\delta_{lj}-W_{pjkq}\delta_{li}}.
  \end{align*}
  Contracting with $W_{ijkl}$ and exploiting again the first Bianchi identity, we obtain formula \eqref{l-intidGenDim}.
\end{proof}

In dimension four, using identity \eqref{eq-ww} and the orthogonality of $W^\pm$, the formula simplifies to the following 

\begin{cor}\label{l-intid} Let $(M^{4},g)$ be a four dimensional  Riemannian manifold. Then
$$
\frac{1}{2}\Delta|W|^{2}=\abs{\nabla W}^2 -4\abs{\delta W}^2 +\frac{1}{2}R\abs{W}^2-3W_{ijkl}W_{ijpq}W_{klpq}-2\pa{W_{ijkl}C_{jkl}}_i \,.
$$
As a consequence, if $M$ is closed one has the integral identity (see \cite{cgy2}):
$$
\int_{M}\Big(|\nabla W|^{2}-4|\delta W|^{2}+\frac{1}{2}R|W|^{2}-3 W_{ijkl}W_{ijpq}W_{klpq}\Big)\,dV = 0 \,.
$$
Moreover, we have
$$
\int_{M}\Big(|\nabla W^{\pm}|^{2}-4|\delta W^{\pm}|^{2}+\frac{1}{2}R|W^{\pm}|^{2}-3 W^{\pm}_{ijkl}W^{\pm}_{ijpq}W^{\pm}_{klpq}\Big)\,dV = 0 \,.
$$
\end{cor}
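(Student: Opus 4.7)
The plan is to deduce all three assertions directly from the general Weitzenb\"ock formula of Lemma \ref{l-intidGenDim} by specialising to dimension four and then integrating.

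First, I would set $n=4$ in Lemma \ref{l-intidGenDim}. The coefficient $2(n-2)/(n-3)$ immediately becomes $4$, producing the $-4|\delta W|^{2}$ term. The only substantive simplification is to rewrite the Ricci--Weyl term as $\tfrac12 R|W|^{2}$, which follows from the dimension-four algebraic identity
\[
W_{pikl}W_{qikl}=\tfrac14|W|^{2}g_{pq}.
\]
I would prove this by expanding $W=W^{+}+W^{-}$ and applying \eqref{eq-ww} to each summand, while showing that the mixed contraction $W^{+}_{pikl}W^{-}_{qikl}$ vanishes: using the self-duality of $W^{+}$ and the anti-self-duality of $W^{-}$ with respect to the Hodge star on the last pair of indices, one rewrites this mixed term via the Levi--Civita symbol and derives the self-cancelling identity $W^{+}_{pikl}W^{-}_{qikl}=-W^{+}_{pikl}W^{-}_{qikl}$. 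Contracting the resulting algebraic identity with $R_{pq}$ and substituting into Lemma \ref{l-intidGenDim} yields the first pointwise formula of the corollary.

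For the second assertion I would integrate the pointwise formula over the closed manifold $M$: the term $\tfrac12\Delta|W|^{2}$ integrates to zero by Stokes' theorem, and the last term $-2(W_{ijkl}C_{jkl})_{i}$ is a pure divergence and therefore also has vanishing integral. What remains is precisely the Chang--Gursky--Yang identity.

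The identities for $W^{\pm}$ are obtained by rerunning the proof of Lemma \ref{l-intidGenDim} with $W$ replaced throughout by $W^{\pm}$, using that the Hodge star commutes with covariant differentiation (so that the second Bianchi identity and the commutator identities for second covariant derivatives used in the proof of Lemma \ref{l-intidGenDim} restrict cleanly to the self-dual and anti-self-dual parts), and that the role of the Cotton tensor $C$ is played by the divergence $\delta W^{\pm}$ via \eqref{def_Cotton_comp_Weyl}. Integration then gives the claimed integral identities for $W^{\pm}$ separately. The main obstacle I anticipate is precisely the bookkeeping at this last step: tracking all coefficients and verifying that no residual cross terms between $W^{+}$ and $W^{-}$ survive the algebraic manipulations, so that the self-dual and anti-self-dual Weitzenb\"ock formulas genuinely decouple into the form displayed in the corollary.
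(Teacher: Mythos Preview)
Your proposal is correct and matches the paper's approach: the paper simply says that in dimension four one uses identity \eqref{eq-ww} together with the orthogonality of $W^{+}$ and $W^{-}$ to reduce Lemma \ref{l-intidGenDim} to the stated form, and then integrates. Your expanded argument for the vanishing of the mixed term $W^{+}_{pikl}W^{-}_{qikl}$ via the Hodge star, and your plan to obtain the $W^{\pm}$ identities by rerunning the Weitzenb\"ock computation and using that the Hodge star commutes with $\nabla$ so that no $W^{+}$/$W^{-}$ cross terms survive, are exactly the content behind that one-line justification.
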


\smallskip

\subsection{Some Riemannian functionals}

Let $(M^{4},g)$ be a closed four-dimensional Riemannian manifold. First of all we recall the Chern-Gauss-Bonnet formula and the Hirzebruch signature formula (see~\cite[Equation 6.31]{besse})
\begin{equation}\label{cgb}
\int_{M}\left(|W^{+}|^{2}+|W^{-}|^{2}-2|\rd|^{2}+\frac{1}{6}R^{2} \right)\,dV = 32\pi^{2}\chi(M) \,,
\end{equation}
\begin{equation}\label{hs}
\int_{M}\left(|W^{+}|^{2} - |W^{-}|^{2}\right)\,dV = 48 \pi^{2} \tau(M)\,.
\end{equation}
If we denote with $\sigma_{2}(A)$ the second-elementary function of the eigenvalues of the Schouten tensor $A:=\frac{1}{2}\left(Ric-\frac{1}{6}R\,g\right)$, it is easy to see that 
$$
\sigma_{2}(A) \,=\, \frac{1}{96}R^{2} - \frac{1}{8}|\rd|^{2}
$$
and the Chern-Gauss-Bonnet formula reads
$$
\int_{M}\left(|W^{+}|^{2}+|W^{-}|^{2}+16\sigma_{2}(A)\right)\,dV = 32\pi^{2}\chi(M) \,.
$$
In particular, using \eqref{hs}, we get
\begin{equation}\label{comb}
8\int_{M}\sigma_{2}(A)\,dV = \int_{M}|W^{\pm}|^{2}\,dV+8\pi^{2}\left(2\chi(M)\pm\tau(M)\right) \,.
\end{equation}
Observing that the $L^{2}$-norm of the Weyl tensors $W^\pm$ in dimension four are conformally invariant and we define 
$$
\mathcal{W^{\pm}}(M, [g]):=\int_{M}|W^{\pm}_{g}|_{g}^{2}\,dV_{g}\,.
$$
In particular, it follows that the integral of $\sigma_{2}(A)$ is conformally invariant too. We denote $\mathcal{Y}(M,[g])$ the Yamabe invariant associated to $(M^{4},g)$ (here $[g]$ is the conformal class of $g$) defined by
$$
\mathcal{Y}(M,[g]) := \inf_{\gt\in[g]} \frac{\int_{M}\widetilde{R}\,dV_{\gt}}{\operatorname{Vol}_{\gt}(M)^{\frac12}} = 6\inf_{u\in W^{1,2}(M)} \frac{\int_{M}|\nabla u|^{2}\,dV+\frac{1}{6}\int_{M}R\,u^{2}\,dV}{\left(\int_{M} u^{4}\,dV\right)^{\frac12}}
$$
It is well known that, on a closed manifold, $\mathcal{Y}(M,[g])$ is positive (respectively zero or negative) if and only if there exists a conformal metric in the conformal class $[g]$ with everywhere positive (respectively zero or negative) scalar curvature. We recall the following lower bound for the Yamabe invariant which was proved by Gursky~\cite{gur127}. 
\begin{lemma}\label{lem-gur} Let $(M^{4},g)$ be a closed four-dimensional manifold. Then, the following estimate holds
$$
\mathcal{Y}(M,[g])^{2} \geq 96\int_{M}\sigma_{2}(A)\,dV_{g} \,=\,\int_{M}R^{2}\,dV_{g} - 12\int_{M}|\rd|^{2}\,dV_{g}\,.
$$
Equivalently,
$$
\mathcal{Y}(M,[g])^{2} \geq 12 \,\mathcal{W}^\pm(M,[g])+96\pi^{2}\left(2\chi(M)\pm\tau(M)\right)\,.
$$
Moreover, the inequality is strict unless $(M^{4},g)$ is conformal to an Einstein manifold. 
\end{lemma}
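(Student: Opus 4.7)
The natural strategy is to pass to a Yamabe representative of the conformal class, where $\mathcal{Y}(M,[g])^{2}$ becomes an honest integral of $R^{2}$, and then to apply the pointwise algebraic identity $R^{2} = 96\,\sigma_{2}(A) + 12\,|\rd|^{2}$ already encoded in the definition of $\sigma_{2}(A)$ recalled just above the statement. The conformal invariance of the total $\sigma_{2}(A)$-integral in dimension four then lets one transfer the bound from the Yamabe representative back to the original metric $g$.

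First, by the classical resolution of the Yamabe problem (Yamabe--Trudinger--Aubin--Schoen), there exists a smooth Yamabe minimizer $g_{Y}\in[g]$ of constant scalar curvature $R_{g_{Y}}$. Because $R_{g_{Y}}$ is constant,
$$
\mathcal{Y}(M,[g])\;=\;R_{g_{Y}}\,\operatorname{Vol}_{g_{Y}}(M)^{1/2},
\qquad\text{hence}\qquad
\mathcal{Y}(M,[g])^{2}\;=\;\int_{M}R_{g_{Y}}^{2}\,dV_{g_{Y}},
$$
regardless of the sign of $\mathcal{Y}(M,[g])$. Applying the identity $R_{g_{Y}}^{2} = 96\,\sigma_{2}(A_{g_{Y}}) + 12\,|\rd_{g_{Y}}|^{2}$ pointwise and integrating yields
$$
\mathcal{Y}(M,[g])^{2}\;=\;96\int_{M}\sigma_{2}(A_{g_{Y}})\,dV_{g_{Y}}\;+\;12\int_{M}|\rd_{g_{Y}}|^{2}\,dV_{g_{Y}}.
$$

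Since $\int_{M}\sigma_{2}(A)\,dV$ is conformally invariant in dimension four, either directly from \eqref{comb} or equivalently from \eqref{cgb} together with the pointwise conformal invariance of $|W|^{2}\,dV$, one has $\int\sigma_{2}(A_{g_{Y}})\,dV_{g_{Y}} = \int\sigma_{2}(A_{g})\,dV_{g}$. Dropping the nonnegative term $12\int|\rd_{g_{Y}}|^{2}\,dV_{g_{Y}}$ then produces the first inequality of the lemma; the equivalent form in terms of $\mathcal{W}^{\pm}(M,[g])$ is obtained by substituting $96\int\sigma_{2}(A)\,dV = 12\,\mathcal{W}^{\pm}(M,[g])+96\pi^{2}(2\chi(M)\pm\tau(M))$ from \eqref{comb}. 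Equality forces $\rd_{g_{Y}}\equiv 0$, so $g_{Y}$ is Einstein and $g$ is conformal to an Einstein metric. I do not foresee a genuine obstacle in this proof: the whole argument is essentially a one-line application of Cauchy--Schwarz, which is realized as an equality precisely by the constant scalar curvature representative, and the only nontrivial input is the existence of the smooth Yamabe minimizer $g_{Y}$. The special case $(\mathbb{S}^{4},[g_{\mathrm{round}}])$ deserves only a passing remark, as the round metric plays the role of $g_{Y}$ and is itself Einstein, so equality holds as predicted.
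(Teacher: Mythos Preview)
The paper does not supply its own proof of this lemma; it is quoted as a result of Gursky with a citation to \cite{gur127}. Your argument is correct and is in fact the standard one: pass to a Yamabe minimizer $g_{Y}$ so that $\mathcal{Y}(M,[g])^{2}=\int_{M}R_{g_{Y}}^{2}\,dV_{g_{Y}}$, apply the pointwise identity $R^{2}=96\,\sigma_{2}(A)+12\,|\rd|^{2}$, invoke the conformal invariance of $\int_{M}\sigma_{2}(A)\,dV$ in dimension four, and drop the nonnegative traceless Ricci term, with equality forcing $g_{Y}$ to be Einstein.
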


As anticipated in the Introduction, we now define the quadratic, scale-invariant functionals given by
$$
\mathfrak{D}^\pm(g) = \operatorname{Vol}_g(M)^{\frac12} \int_M |\delta_{g} W_g^\pm|_g^2\, dV_g \,.
$$
Let also
$$
\mathfrak{D}(g) = \mathfrak{D}^{+}(g)+\mathfrak{D}^{-}(g)=\operatorname{Vol}_g(M)^{\frac12} \int_M |\delta_{g} W_g|_g^2\, dV_g \,.
$$
We also denote with
$$
\mathcal{D}^{\pm}(M,[g])=\inf_{\gt\in[g]}\mathfrak{D}^{\pm}(\gt) \,
$$
and
$$
\mathcal{D}(M,[g])=\inf_{\gt\in[g]}\mathfrak{D}(\gt) \,.
$$
It is clear that if $[g]$ contains a metric with $\delta W^{\pm}\equiv 0$, then $\mathcal{D}^{\pm}(M,[g])=0$. For example, if $(M^{4},g)$ is K\"ahler with positive scalar curvature $R_g$, then $\gt=R_{g}^{-2}g$ satisfies $\delta_{\gt} W_{\gt}^+ \equiv 0$ and thus $\mathcal{D}^{+}(M,[g])=0$.

Finally, for $\alpha\in[0,\frac59]$ we define the Riemannian functionals
$$
\mathfrak{D}_{\alpha}^{\pm}(g) = \operatorname{Vol}_g(M)^{\frac12}\left( \int_M |\delta_{g} W_g^\pm|_g^2\, dV_g -\frac{5-9\alpha}{24}\int_{M}R_{g}|W^{\pm}_{g}|^{2}_{g}\,dV_{g}\right)\,,
$$
and its infimum
$$
\mathcal{D}_{\alpha}^{\pm}(M,[g])=\inf_{\gt\in[g]}\mathfrak{D}_{\alpha}^{\pm}(\gt) \,.
$$
Note that
$$
\mathfrak{D}_{\frac59}^{\pm}(g)=\mathfrak{D}^{\pm}(g)\,.
$$

\

\section{The Euler-Lagrange equation} \label{sec-eul}

Let $M^4$ be a closed smooth manifold. In this section we derive the Euler-Lagrange equations satisfied, respectively, by a critical metric in the conformal class of the functional 
$$
g\mapsto \mathfrak{D}(g) =\operatorname{Vol}_g(M)^{\frac{1}{2}} \int_M |\delta_{g} W_g|_g^2\, dV_g \,,
$$
and by the conformal factor.

\subsection{Critical metrics}\label{ssec-cm}
Let $U\in\cinf$ and $u(x, t) : M\times \mathds{R} \ra M$ be a  $1$-parameter family of smooth functions such that $u(x, 0)\equiv 0$, $\frac{d u(x, t)}{dt}\bigg|_{t=0}=U(x)$; for a conformal change of the metric of the form
\begin{equation}
  \tilde{g}(t) = e^{2u(x, t)}g,
\end{equation}
from the formula for the conformal change of $\delta W$ (see \cite{besse}) we easily deduce that
\begin{equation}
  e^{3u(x, t)}\widetilde{W}_{ijkl, i} = W_{ijkl, i} + u_i(x, t)W_{ijkl}
\end{equation}
Since, for $n=4$, we have $dV_{\tilde{g}(t)}= e^{4u(x, t)} dV_g$, we obtain
\begin{align}\nonumber
  \mathfrak{D}(\tilde{g}(t)) &=\operatorname{Vol}_{\tilde{g}(t)}(M)^{\frac12} \int_M |\delta_{\tilde{g}(t)} W_{\tilde{g}(t)}|_{\tilde{g}(t)}^2\, dV_{\tilde{g}(t)} \\\label{eq-pippo}
  &=\left(\int_Me^{4u(x, t)} dV_g\right)^{\frac12} \int_M e^{-2u(x, t)}\pa{\abs{\delta_{g} W_g}^2 + \abs{u_s(x, t) W_{sijk}}_{g}^2 + 2u_s(x, t)W_{sijk}W_{pijk, p} }\,dV_g.
\end{align}
 Letting $V:= \operatorname{Vol}_g(M)$, a simple computation shows that
 \begin{align*}
   \frac{d\mathfrak{D}(\tilde{g}(t))}{dt}\bigg|_{t=0}&=\frac{1}{2}\pa{\int_M e^{4u(x, 0)}}^{-1/2}\pa{\int_M 4U\,dV_g}\int_M\abs{\delta_gW_g}_g^2\,dV_g \\&+\sqrt{V}\int_M\pa{-2U\abs{\delta_gW_g}_g^2+2U_sW_{sijk}W_{pijk, p}}\,dV_g
    \\&=2\set{\int_M\sq{V^{-1/2}\pa{\int_M \abs{\delta_gW}_g^2\,dV_g}-V^{1/2}\abs{\delta_gW}_g^2-V^{1/2}\pa{W_{sijk, s}W_{pijk}}_p} U\,dV_g};
 \end{align*}
 thus we deduce that $g$ is a critical point in the conformal class of the functional, i.e. 
 \begin{equation*}
 \frac{d\mathfrak{D}(\tilde{g}(t))}{dt}\bigg|_{t=0}=0 \quad\quad \forall U \in \cinf \,,
\end{equation*}
if and only if 
\begin{equation}\label{eq_ELprel}
  W_{sijk, sp}W_{pijk} = \frac{1}{\operatorname{Vol}_g(M)}\int_M \abs{\delta_gW}_g^2\,dV_g -2\abs{\delta_gW}_g^2.
\end{equation}
Now, exploiting the algebraic properties of the curvature in dimension four, we show that  \eqref{eq_ELprel} is equivalent to the condition defining weak harmonic Weyl metrics (see Definition \ref{def_WHW}).

\begin{proposition}
  Let $M^4$ be a closed smooth manifold. Then a metric $g$ is critical in the conformal class for the functional $\mathfrak{D}(g)$  if and only if $g$ is a weak harmonic Weyl metric, i.e. it satisfies the formula
 \begin{equation}\label{EQ_EulerLagrange}
\frac{1}{2}\Delta |W|^{2} = |\nabla W|^{2}+\frac{1}{2}R|W|^{2}-3\,W_{ijkl}W_{ijpq}W_{klpq}-8|\delta W|^{2}+\frac{4}{\operatorname{Vol}(M)}\int_M|\delta W|^{2}\,dV \,.
\end{equation}
\end{proposition}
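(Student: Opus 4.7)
The plan is to use the universal Weitzenb\"ock identity of Corollary \ref{l-intid} in order to convert the pointwise Euler--Lagrange equation \eqref{eq_ELprel} into the weak harmonic Weyl equation \eqref{EQ_EulerLagrange}, and back. Since the preceding calculation already shows that criticality in the conformal class is equivalent to
\[
W_{sijk,sp}W_{pijk} \,=\, \frac{1}{\operatorname{Vol}_g(M)}\int_M |\delta_g W|_g^2\,dV_g - 2|\delta_g W|_g^2,
\]
everything reduces to rewriting the Cotton divergence term $-2(W_{ijkl}C_{jkl})_i$ that appears in Corollary \ref{l-intid} in terms of $|\delta W|^2$ and $W_{sijk,sp}W_{pijk}$.

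The first step is to expand $(W_{ijkl}C_{jkl})_i = W_{ijkl,i}\,C_{jkl} + W_{ijkl}\,C_{jkl,i}$ and substitute the four-dimensional identity $C_{jkl}=-2(\delta W)_{jkl}=-2\,W_{tjkl,t}$ from \eqref{def_Cotton_comp_Weyl}. A direct index relabelling gives $W_{ijkl,i}=(\delta W)_{jkl}$, so the first term equals $-2|\delta W|^2$. For the second, differentiating yields $C_{jkl,i}=-2\,W_{tjkl,ti}$, and another relabelling of the dummy indices (namely $(i,j,k,l,t)\mapsto (p,i,j,k,s)$ with the outer derivative index going to $p$) identifies $W_{ijkl}\,W_{tjkl,ti}$ with $W_{sijk,sp}W_{pijk}$. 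Crucially, no commutator of covariant derivatives appears, because in both expressions the inner derivative is contracted with the first index of $W$ before the outer derivative is taken. Altogether,
\[
(W_{ijkl}C_{jkl})_i \,=\, -2|\delta W|^2 - 2\,W_{sijk,sp}W_{pijk}.
\]
Plugging this back into Corollary \ref{l-intid}, the summand $-4|\delta W|^2$ is replaced by $+4\,W_{sijk,sp}W_{pijk}$, producing the universal four-dimensional identity
\[
\tfrac{1}{2}\Delta|W|^{2} \,=\, |\nabla W|^{2}+\tfrac{1}{2}R|W|^{2}-3\,W_{ijkl}W_{ijpq}W_{klpq}+4\,W_{sijk,sp}W_{pijk},
\]
valid on every closed four-manifold, with no criticality hypothesis.

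At this point the equivalence is immediate. If $g$ satisfies the pointwise Euler--Lagrange equation \eqref{eq_ELprel}, substituting the expression of $W_{sijk,sp}W_{pijk}$ into the universal identity above gives precisely \eqref{EQ_EulerLagrange}. Conversely, comparing \eqref{EQ_EulerLagrange} with the universal identity forces $4\,W_{sijk,sp}W_{pijk} = -8|\delta W|^2+\tfrac{4}{\operatorname{Vol}(M)}\int_M|\delta W|^2\,dV$, which is \eqref{eq_ELprel}. I expect no real obstacle: the proof is essentially algebraic and the only delicate point is keeping track of the index bookkeeping in the identification $W_{ijkl}\,W_{tjkl,ti}=W_{sijk,sp}W_{pijk}$, which must be done carefully to make sure the two second-order Weyl-Weyl contractions are genuinely the same tensor quantity.
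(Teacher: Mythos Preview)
Your argument is correct, and it is genuinely different from the paper's proof. The paper works separately with $W^+$ and $W^-$: it takes the double divergence of the four-dimensional identity $W^{\pm}_{sijk}W^{\pm}_{pijk}=\tfrac{1}{4}|W^{\pm}|^2\delta_{sp}$, isolates the commutator $\Theta=W^{\pm}_{sijk,sp}W^{\pm}_{pijk}-W^{\pm}_{sijk,ps}W^{\pm}_{pijk}$, and then appeals to formulas from \cite{cgy2} to evaluate $2\Theta=3(W^{\pm})^3-\tfrac{R}{2}|W^{\pm}|^2$, combining with Lemma~\ref{l-urka}. Your route instead treats Corollary~\ref{l-intid} as a black box that already contains the curvature algebra, and reduces everything to the elementary observation that $-2(W_{ijkl}C_{jkl})_i=4|\delta W|^2+4\,W_{sijk,sp}W_{pijk}$ in dimension four. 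This bypasses the commutator computation and the external reference entirely; the price is that the heavy lifting has been pushed into the derivation of Corollary~\ref{l-intid}, but since that result is already available in the paper your shortcut is perfectly legitimate and arguably cleaner. Your remark that no commutator is needed in the relabelling $W_{ijkl}W_{tjkl,ti}=W_{sijk,sp}W_{pijk}$ is the key point, and it is correct: both sides are $W\cdot\nabla(\delta W)$ with the same placement of derivatives.
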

\begin{proof}
  First we observe that equation \eqref{eq_ELprel} holds separately for the self dual and the anti-self dual part of Weyl, i.e.
  \[
   W^\pm_{sijk, sp}W^\pm_{pijk} = \frac{1}{\operatorname{Vol}(M)}\int_M \abs{\delta W^\pm}^2\,dV -2\abs{\delta W^\pm}^2.
  \]
  We now perform our computation for the self dual part $W^+$. Since  $W^+_{sijk}W^+_{pijk} = \frac{1}{4}\abs{W^+}\delta_{sp}$ by equation \eqref{eq-ww}, we have
  \begin{equation}
    \pa{W^+_{sijk}W^+_{pijk}}_{ps} = \frac{1}{4}\Delta\abs{W^+}^2.
  \end{equation}
On the other hand,
\begin{align*}
    \pa{W^+_{sijk}W^+_{pijk}}_{ps} &= W^+_{sijk, ps}W^+_{pijk}+W^+_{jkis, p}W^+_{pijk, s}+\abs{\delta W^+}^2+W^+_{pijk}W^+_{sijk, sp},
  \end{align*}
  which implies
  \[
  2W^+_{sijk, sp}W^+_{pijk} = \Theta + \frac{1}{4}\Delta\abs{W^+}-W^+_{jkis, p}W^+_{pijk, s}-\abs{\delta W^+}^2,
  \]
  where $\Theta := W^+_{sijk, sp}W^+_{pijk}-W^+_{sijk, ps}W^+_{pijk}$; thus, using \eqref{eq_ELprel} and Lemma \ref{l-urka}
  we deduce
  \begin{equation}\label{eq_urca}
    \frac{1}{2}\Delta\abs{W^+}^2= \abs{\nabla W^+}^2 -8\abs{\delta W^+}^2 +\frac{4}{V}\int_M \abs{\delta W^+}^2\,dV -2\Theta.
  \end{equation}
Now, using formulas (3.16)-(3.22) and Lemma 3.4 in \cite{cgy2}, a computation shows that
\[
2\Theta= 3(W^+)^3 - \frac{R}{2}\abs{W^+}^2,
\]
thus completing the proof for $W^+$. The same computation gives the formula for $W^-$ and, by orthogonality, \eqref{EQ_EulerLagrange} follows.
\end{proof}

\


\subsection{The PDE for the conformal factor}

Let now $g_0$ be a fixed Riemannian metric on $M^4$ and let $v:=e^{-u}$ for some $u\in C^{\infty}(M^4)$. Then, equation \eqref{eq-pippo} yields 
$$
\mathfrak{D}(v):=\mathfrak{D}(v^{-2}g_0)= \left(\int_M v^{-4} dV\right)^{\frac12} \int_M \pa{\frac{1}{4}|W|^{2}|\nabla v|^{2}+\abs{\delta W}^2 v^{2} -  (v)^{2}_{s}\, W_{sijk}W_{pijk, p} }\,dV
$$
where all the geometric quontities are referred to the fixed metric $g_0$.
Clearly we have
$$
\mathcal{D}(M,[g_0]) = \inf_{0<v\in C^{\infty}(M)} \mathfrak{D}(v)\,.
$$
To simplify the notation, now let 
$$
a:=\frac{1}{4}|W|^{2}, \quad B_{s}:= W_{sijk}W_{pijk, p}\,\,\quad\text{and}\quad\,\, c_0:=|\delta W |^{2} \,.
$$
Then the previous relation rewrites as
$$
\mathfrak{D}(v) =\left(\int_M v^{-4} dV\right)^{\frac12} \int_M \pa{a |\nabla v|^{2}+c_0\, v^{2} -  \langle B, \nabla (v)^{2}\rangle }\,dV\,.
$$
Imposing that $v$ is critical for the functional $v\mapsto\mathfrak{D}(v)$, i.e.
$$
\frac{d\mathfrak{D}(v+t\varphi)}{dt}\bigg|_{t=0}=0 \quad\quad\forall\, 0<\varphi\in C^{\infty}(M)\,,
$$
we obtain the Euler-Lagrange equation
\begin{equation}\label{theeq}
-\diver\pa{a \nabla v}+ c\, v = \frac{\lambda(v)}{v^{5}}
\end{equation}
with
$$
c=c_0+\diver{B}\quad\quad\text{and}\quad\quad\lambda(v):= \mathfrak{D}(v)\left(\int_M v^{-4} dV\right)^{-3/2}\,.
$$
In particular, the metric $v^{-2}g_0$ is a weak harmonic Weyl metric, i.e. it satisfies \eqref{EQ_EulerLagrange}.

\

\section{Existence results and proof of Theorem \ref{t-main}} \label{sec-ex}

\subsection{Proof of Theorem \ref{t-main}}

Consider the uniformly elliptic self-adjoint operator
\begin{equation}\label{ope}
L v := -\diver\pa{a\nabla v }+ c\, v
\end{equation}
whit $a\in C^\infty(M)$, $a>0$ and $c\in C^\infty(M)$. Note that no sign conditions are required on the coefficient $c$.  We assume that the first eigenvalue of $L$, i.e.
$$
\lambda_1 := \inf_{u\in H^1(M),\, u\not\equiv 0} \mathfrak{R}(u)=\inf_{u\in H^1(M),\, u\not\equiv 0} \frac{ \int_M \left(a |\nabla u|^2 + c\,u^2\right)\,dV}{\int_{M} u^2\,dV}\,,
$$
is non-negative. We will show the existence of positive solutions to the equation
$$
L v = \frac{\lambda(v)}{v^5}
$$
with
$$
\lambda(v)= \mathfrak{D}(v)\left(\int_M v^{-4} dV\right)^{-\frac{3}{2}}, \,\,\,
\mathfrak{D}(v) =\pa{\int_M v^{-4} dV}^{\frac{1}{2}}\int_M \pa{a|\nabla v|^2+c\, v^{2}  }\,dV
$$
and that any two such solutions are proportional. More precisely, the solutions $v$ that we find satisfy
$$
\mathfrak{D}(v) =\hat{\mathcal D}
$$
where
\begin{equation}\label{gianni}
\hat{\mathcal D}:=\inf_{0<u\in C^\infty(M)} \mathfrak{D}(u)\,.
\end{equation}

Now let $(M^4, g_0)$ be a closed Riemannian manifold where $g_0$ is the metric of Aubin (see the Introduction) satisfying 
\begin{equation}\label{ass}
|W_{g_0}|^2_{g_0}>0\quad\text{on}\quad M\,.
\end{equation}

Note that the previous assumptions on the operator $L$ are satisfied in the case of the geometric one defined in Section \ref{sec-eul} with the choice $a:=\frac{1}{4}|W_{g_0}|{g_0}^{2}$ and $c_{g_0}=(c_0)_{g_0}+\diver_{g_0}{B}_{g_0}$. Therefore the conformal metrics $v^{-2} g_0$ have weak harmonic Weyl curvature and the proof of Theorem \ref{t-main} is completed.

\subsection{Preliminary results} From now on, all geometric quantities are referred to the metric $g_0$ and we omit to write their dependence. Let
$$
H(M):=\left\{u\in H^1(M): u>0 \,\,\text{a.e. and}\,\int_M u^{-4}\,dV<\infty \right\}
$$
and define
$$
\mathcal{D}:=\inf_{u\in H(M)} \mathfrak{D}(u)\,.
$$
By standard elliptic theory, there exists a smooth, positive, first eigenfunction $\varphi_1$ of $L$, i.e. a solution of
$$
L \varphi_1 = \lambda_1 \varphi_1\,.
$$
Note that $\mathfrak{R}(\varphi_1)=\lambda_1$. We have the following weak maximum principle.

\begin{lemma}\label{l-wmp} Let $\lambda_1>0$. Under the previous assumptions, if $u\in H^1(M)$ satisfies $Lu\geq 0$ in the weak sense, then $u\geq 0$ a.e. on $M$.
\end{lemma}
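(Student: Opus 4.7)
The plan is to use $u_{-} := \max(-u, 0)$ as a test function in the weak formulation of $Lu \geq 0$, and then exploit positivity of $\lambda_1$ via the variational characterization.

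First I would recall that for any $u \in H^1(M)$, both $u_{+}$ and $u_{-}$ belong to $H^1(M)$, with weak gradients
\[
\nabla u_{-} = -\nabla u \cdot \mathds{1}_{\{u<0\}}, \qquad \nabla u_{+} = \nabla u \cdot \mathds{1}_{\{u>0\}}
\]
(so that $\nabla u \cdot \nabla u_{-} = -|\nabla u_{-}|^2$ a.e.\ on $M$), and $u \cdot u_{-} = -u_{-}^2$ a.e. The inequality $Lu \geq 0$ in the weak sense means precisely that
\[
\int_M \bigl( a\,\nabla u \cdot \nabla \varphi + c\,u\,\varphi \bigr)\,dV \geq 0 \qquad \forall\,\varphi \in H^1(M),\,\varphi \geq 0\text{ a.e.}
\]
Since $u_{-} \geq 0$ and $u_{-} \in H^1(M)$, it is an admissible test function.

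Inserting $\varphi = u_{-}$ and using the pointwise identities above, I would obtain
\[
-\int_M \bigl( a\,|\nabla u_{-}|^2 + c\,u_{-}^2 \bigr)\,dV \geq 0, \qquad\text{i.e.}\qquad \int_M \bigl( a\,|\nabla u_{-}|^2 + c\,u_{-}^2 \bigr)\,dV \leq 0.
\]
Suppose by contradiction that $u_{-}\not\equiv 0$, so that $\int_M u_{-}^2\,dV > 0$. Then $u_{-}$ is an admissible competitor in the Rayleigh quotient defining $\lambda_1$, and by hypothesis $\lambda_1 > 0$, hence
\[
0 < \lambda_1 \leq \mathfrak{R}(u_{-}) = \frac{\int_M ( a\,|\nabla u_{-}|^2 + c\,u_{-}^2 )\,dV}{\int_M u_{-}^2\,dV} \leq 0,
\]
a contradiction. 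Therefore $u_{-} \equiv 0$ a.e., which is exactly the conclusion $u \geq 0$ a.e.\ on $M$.

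The only technical point to check carefully is the admissibility of $u_{-}$ as a test function and the computation of $\nabla u \cdot \nabla u_{-}$; this is standard in the theory of Sobolev spaces (Stampacchia's lemma). There is no genuine obstacle here: the argument works precisely because no sign assumption was imposed on $c$, only on $\lambda_1$, and the presence of the (possibly negative) zeroth-order term is absorbed directly into the Rayleigh quotient.
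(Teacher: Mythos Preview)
Your argument is correct and is the standard route to this weak maximum principle. The paper itself does not supply a proof of this lemma (it is stated as a known fact), so there is nothing to compare against; your use of $u_{-}$ as a test function together with the variational characterization of $\lambda_1$ is exactly what one expects here. One small remark: in the paper the weak inequality $Lu\geq 0$ is phrased with test functions $\varphi\in C^1(M)$, $\varphi\geq 0$, so strictly speaking you should note that the inequality extends by density to nonnegative $\varphi\in H^1(M)$ before plugging in $\varphi=u_{-}$; this is routine since the bilinear form $(u,\varphi)\mapsto\int_M(a\,\nabla u\cdot\nabla\varphi+c\,u\varphi)\,dV$ is continuous on $H^1(M)\times H^1(M)$.
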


Moreover, using Lemma \ref{l-wmp}, one can prove the following strong maximum principle.

\begin{lemma}\label{l-smp} Let $\lambda_1>0$. Under the previous assumptions, if $u\in H^1(M)$ satisfies $Lu\geq 0$ in the weak sense, then either $u =0$ a.e. on $M$ or $\operatorname{essinf}_M u >0$.
\end{lemma}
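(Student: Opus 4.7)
The plan is to reduce the strong maximum principle to a classical one by factoring out the positive first eigenfunction $\varphi_1$ of $L$, thereby absorbing the (possibly negative) zero-order coefficient $c$ into a nonnegative zero-order term proportional to $\lambda_1$.

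First, since $Lu\geq 0$ weakly, Lemma \ref{l-wmp} already yields $u\geq 0$ a.e. on $M$, so it suffices to show that if $u\not\equiv 0$ then $\operatorname{essinf}_M u>0$. Since $L$ is a self-adjoint uniformly elliptic operator on the closed manifold $M$ with smooth coefficients, standard spectral theory ensures that the first eigenfunction $\varphi_1$ can be chosen smooth and strictly positive, hence by compactness $\min_M \varphi_1>0$. I would then introduce the auxiliary function $w:=u/\varphi_1 \in H^1(M)$, which is $\geq 0$ a.e.

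The key step is a Picone-type computation: using the product rule and the identity $L\varphi_1=\lambda_1 \varphi_1$, the inequality $Lu\geq 0$ rewrites, after multiplication by $\varphi_1$, in the divergence form
\begin{equation*}
-\diver\bigl(a\,\varphi_1^{\,2}\,\nabla w\bigr)+\lambda_1\,\varphi_1^{\,2}\,w\;\geq\;0 \qquad \text{weakly on } M.
\end{equation*}
Here the crucial point is that the zero-order coefficient $\lambda_1 \varphi_1^{\,2}$ is now \emph{nonnegative} (by the hypothesis $\lambda_1>0$), and the leading coefficient $a\,\varphi_1^{\,2}$ is a smooth, strictly positive function on $M$. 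In particular, the transformed operator falls within the scope of the classical strong maximum principle for weak supersolutions of uniformly elliptic divergence-form operators with nonnegative zeroth order term.

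Applying the De Giorgi–Trudinger weak Harnack inequality on coordinate balls to the nonnegative supersolution $w$, I obtain that on any ball either $w\equiv 0$ or $\operatorname{essinf} w>0$. Combining this with the connectedness of $M$ via a standard finite-covering argument shows that either $w\equiv 0$ a.e.\ on $M$ or $\operatorname{essinf}_M w>0$. Since $\varphi_1$ is bounded above on $M$, the latter implies $\operatorname{essinf}_M u>0$, while the former gives $u\equiv 0$, completing the proof. The main (though standard) obstacle is the lack of a sign condition on $c$; the trick of passing from $u$ to $u/\varphi_1$ is precisely what overcomes it, and this is why the hypothesis $\lambda_1>0$ is used.
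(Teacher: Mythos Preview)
Your proof is correct. The paper itself does not give a detailed argument for this lemma: it merely states that the result ``can be proved using Lemma~\ref{l-wmp}'' and leaves it at that. Your approach---the ground-state substitution $w=u/\varphi_1$, which turns $Lu\geq 0$ into the inequality $-\diver(a\varphi_1^{2}\nabla w)+\lambda_1\varphi_1^{2}w\geq 0$ with \emph{nonnegative} zero-order term, followed by the De Giorgi--Trudinger weak Harnack inequality and a connectedness argument---is the standard and clean way to fill this gap. The pointwise identity $\varphi_1\,Lu=-\diver(a\varphi_1^{2}\nabla w)+\lambda_1\varphi_1^{2}w$ is exactly right, and its weak formulation goes through since $\varphi_1\in C^\infty(M)$ is strictly positive (so $w\in H^1(M)$ and $\varphi_1\phi$, $u\phi$ are admissible test functions). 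One minor remark: Lemma~\ref{l-wmp} is used only to obtain $u\geq 0$ at the outset; the heart of your argument is the eigenfunction substitution, not the weak maximum principle per se. This is entirely consistent with what the paper hints at, and your write-up supplies the missing details.
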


We have a two-sided estimate on $\mathcal{D}$ in terms of $\lambda_1$

\begin{lemma}\label{lem1} Under the previous assumptions, we have
$$
\operatorname{Vol}(M)^{-\frac{1}{2}}\,\lambda_1 \leq \mathcal{D} \leq \frac{\int_M \varphi_1^2\,dV}{\left(\int_M \varphi_1^{-4}\,dV\right)^{\frac{1}{2}}}\,\lambda_1 \,.
$$
\end{lemma}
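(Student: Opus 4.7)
The plan is to prove the two inequalities separately: for the upper bound I take $\varphi_1$ itself as a test function, and for the lower bound I combine the Rayleigh-quotient characterization of $\lambda_1$ with a H\"older estimate that relates $\int_M v^{2}\,dV$, $\int_M v^{-4}\,dV$ and $\operatorname{Vol}(M)$.

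For the upper bound, I would first verify that $\varphi_1 \in H(M)$. By elliptic regularity $\varphi_1 \in C^{\infty}(M)$; since $\lambda_1 \geq 0$, we have $L\varphi_1 = \lambda_1 \varphi_1 \geq 0$, so the strong maximum principle (Lemma \ref{l-smp}) forces $\operatorname{essinf}_{M}\varphi_1 > 0$, whence $\int_M \varphi_1^{-4}\,dV < \infty$ and in particular $\varphi_1 \in H(M)$. Testing the eigenvalue equation $L\varphi_1 = \lambda_1 \varphi_1$ against $\varphi_1$ and integrating by parts yields the identity
\[
\int_M \pa{a|\nabla \varphi_1|^{2} + c\,\varphi_1^{2}}\,dV = \lambda_1 \int_M \varphi_1^{2}\,dV;
\]
substituting this into $\mathfrak{D}(\varphi_1)$ and using $\mathcal{D} \leq \mathfrak{D}(\varphi_1)$ gives the stated right-hand inequality.

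For the lower bound, for every $v \in H(M) \subset H^{1}(M)$ the Rayleigh-quotient characterization of $\lambda_1$ gives $\int_M \pa{a|\nabla v|^{2} + c\,v^{2}}\,dV \geq \lambda_1 \int_M v^{2}\,dV$, hence
\[
\mathfrak{D}(v) \geq \lambda_1 \pa{\int_M v^{-4}\,dV}^{1/2} \int_M v^{2}\,dV.
\]
To estimate the product on the right from below by a power of the volume, I would apply H\"older's inequality to the trivial factorization $1 = v^{4/3}\cdot v^{-4/3}$ with conjugate exponents $p = 3/2$ and $q = 3$, obtaining
\[
\operatorname{Vol}(M) = \int_M v^{4/3}\, v^{-4/3}\,dV \leq \pa{\int_M v^{2}\,dV}^{2/3} \pa{\int_M v^{-4}\,dV}^{1/3}.
\]
Raising to the $3/2$ power gives $\pa{\int_M v^{-4}\,dV}^{1/2}\int_M v^{2}\,dV \geq \operatorname{Vol}(M)^{3/2}$, and taking the infimum over $v \in H(M)$ delivers the desired lower bound on $\mathcal{D}$.

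The argument is essentially routine: the only substantive choices are verifying $\varphi_1 \in H(M)$ via the strong maximum principle (which leverages $\lambda_1 \geq 0$) and selecting the H\"older exponents tuned to the homogeneities $2$ and $4$ built into the functional $\mathfrak{D}$. I do not anticipate any real obstacle.
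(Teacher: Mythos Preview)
Your argument is correct and follows the same route as the paper: test with $\varphi_1$ for the upper bound, and for the lower bound combine $\lambda_1\le\mathfrak{R}(u)$ with a H\"older/Jensen estimate relating $\int u^{2}$, $\int u^{-4}$ and $\operatorname{Vol}(M)$. Two small remarks. First, your H\"older step actually yields $\bigl(\int_M v^{-4}\,dV\bigr)^{1/2}\int_M v^{2}\,dV\ge \operatorname{Vol}(M)^{3/2}$, hence $\mathcal{D}\ge \lambda_1\operatorname{Vol}(M)^{3/2}$; this is the bound the paper's Jensen argument really gives as well, and the printed exponent $\operatorname{Vol}(M)^{-1/2}$ (both in the lemma and in the displayed Jensen inequality) appears to be a typo---note that for the only use of the lemma, namely $\mathcal{D}=0\iff\lambda_1=0$, the precise exponent is irrelevant. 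Second, you do not need Lemma~\ref{l-smp} (stated only for $\lambda_1>0$) to check $\varphi_1\in H(M)$: the paper already records, just before the lemma, that $\varphi_1$ is smooth and strictly positive by standard elliptic theory, so $\int_M\varphi_1^{-4}\,dV<\infty$ is immediate.
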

\begin{proof}
By Jensen inequality, for every $u\in H(M)$
$$
\frac{1}{\int_M u^2\,dV}\leq \left(\int_M u^{-4}\,dV\right)^{\frac{1}{2}} \operatorname{Vol}(M)^{\frac{1}{2}}\,.
$$
Then
$$
\lambda_1 \leq \mathfrak{R}(u) \leq \mathfrak{D}(u) \operatorname{Vol}(M)^{\frac{1}{2}}
$$
and the first inequality follows. Moreover, for every $u\in H(M)$ we have
$$
\mathfrak{D}(u) = \mathfrak{R}(u) \frac{\int_M u^2\,dV}{\left(\int_M u^{-4}\,dV\right)^{\frac{1}{2}}}\,.
$$
Then
$$
\mathcal{D}\leq \mathfrak{D}(\varphi_1) = \mathfrak{R}(\varphi_1) \frac{\int_M \varphi_1^2\,dV}{\left(\int_M \varphi_1^{-4}\,dV\right)^{\frac{1}{2}}}=\frac{\int_M \varphi_1^2\,dV}{\left(\int_M \varphi_1^{-4}\,dV\right)^{\frac{1}{2}}}\,\lambda_1  \,.
$$
\end{proof}

Consequently, by Lemma \ref{l-wmp} and Lemma \ref{l-smp}, maximum principles hold whenever $\mathcal{D}>0$ and $\mathcal{D}=0$ if and only if $\lambda_1=0$.

\subsection{Existence}
In this subsection we prove that the functional $u\mapsto\mathfrak{D}(u)$ admits a minimum $v$ in $H(M)$, that $v$ satisfies the associated Euler-Lagrange equation and it is smooth.
\begin{lemma}\label{lem2}
Suppose that $u\in H(M)$ and that $\mathfrak D(u)=\mathcal D>0$. Then
$L u\geq 0$ in the weak sense, i.e.
\[\int_M \left\{a\langle\nabla u, \nabla\varphi \rangle + c  u \varphi \right\} dV\geq 0\quad \textrm{for any}\;\; \varphi\in C^1(M), \varphi\geq 0\,.\]
\end{lemma}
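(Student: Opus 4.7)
The plan is to exploit the minimality of $u$ by taking one-sided variations in the convex cone $H(M)$: since $H(M)$ is only a cone and not a vector space, I can perturb $u$ to $u+t\varphi$ only for $\varphi\geq 0$ and $t\geq 0$, which forces a one-sided inequality rather than an equality. Fix $\varphi\in C^1(M)$ with $\varphi\geq 0$; then for every $t\geq 0$ one has $u+t\varphi\geq u>0$ a.e., hence $(u+t\varphi)^{-4}\leq u^{-4}$ is integrable, so $u+t\varphi\in H(M)$. Because $\mathfrak{D}(u)=\mathcal{D}$ is the infimum, the function $\Phi(t):=\mathfrak{D}(u+t\varphi)$ satisfies $\Phi(t)\geq\Phi(0)$ for every $t\geq 0$, so its right lower Dini derivative at $0$ is non-negative.

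Write $A(t):=\int_M(u+t\varphi)^{-4}\,dV$ and $B(t):=\int_M\bigl(a|\nabla(u+t\varphi)|^2+c(u+t\varphi)^2\bigr)\,dV$, so that $\Phi(t)=A(t)^{1/2}B(t)$. The function $B$ is polynomial in $t$ with $B'(0)=2\int_M(a\langle\nabla u,\nabla\varphi\rangle+cu\varphi)\,dV$, whereas $A$ is monotone non-increasing on $[0,\infty)$. By Fatou's lemma applied to the non-negative integrand $\bigl[u^{-4}-(u+t\varphi)^{-4}\bigr]/t$, which converges pointwise to $4u^{-5}\varphi$ as $t\to 0^+$, I obtain
$$
\liminf_{t\to 0^+}\frac{A(0)-A(t)}{t}\;\geq\;4\int_M u^{-5}\varphi\,dV.
$$
Using $A(t)^{1/2}-A(0)^{1/2}=[A(t)-A(0)]/[A(t)^{1/2}+A(0)^{1/2}]$ together with $A(t)^{1/2}+A(0)^{1/2}\to 2A(0)^{1/2}$, the condition $\liminf_{t\to 0^+}[\Phi(t)-\Phi(0)]/t\geq 0$ translates into
$$
A(0)^{1/2}\int_M(a\langle\nabla u,\nabla\varphi\rangle+cu\varphi)\,dV\;\geq\;\frac{B(0)}{A(0)^{1/2}}\int_M u^{-5}\varphi\,dV.
$$
Since $B(0)/A(0)=\lambda(u)>0$ (because $\mathfrak{D}(u)=\mathcal{D}>0$), rearranging gives
$$
\int_M\bigl(a\langle\nabla u,\nabla\varphi\rangle+cu\varphi\bigr)\,dV\;\geq\;\lambda(u)\int_M u^{-5}\varphi\,dV\;\geq\;0,
$$
which is precisely $Lu\geq 0$ tested against $\varphi\geq 0$.

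The main technical point I expect to have to address is that $\int_M u^{-5}\varphi\,dV$ may a priori be infinite, so a naive differentiation under the integral sign based on dominated convergence is not available. The Fatou step above circumvents this issue: the minimality of $u$ itself will force $\int_M u^{-5}\varphi\,dV<\infty$, since otherwise the right Dini derivative of $\Phi$ would equal $-\infty$, contradicting $\Phi(t)\geq\Phi(0)$. The weak inequality $Lu\geq 0$ then follows at once.
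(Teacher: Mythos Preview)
Your argument is correct, and in fact it proves more than the lemma asks: you obtain the weak inequality $Lu \geq \lambda(u)\,u^{-5}$ rather than merely $Lu\geq 0$. The Fatou step and the handling of the Dini derivative are sound; in particular your observation that if $\int_M u^{-5}\varphi\,dV=\infty$ then $\Phi'(0^+)=-\infty$, contradicting minimality, is a neat way to dispose of the integrability issue.

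The paper's proof is different and somewhat simpler for the stated conclusion. It argues by contradiction: assuming $\int_M(a\langle\nabla u,\nabla\varphi\rangle+cu\varphi)\,dV<0$ for some $\varphi\geq 0$, it writes
\[
\mathfrak{D}(u+t\varphi)-\mathfrak{D}(u)=\bigl[A(t)^{1/2}-A(0)^{1/2}\bigr]\,Q(u+t\varphi)+A(0)^{1/2}\bigl[Q(u+t\varphi)-Q(u)\bigr],
\]
and uses only the monotonicity $A(t)\leq A(0)$ for $t>0$ together with $Q(u+t\varphi)\geq \lambda_1\|u+t\varphi\|_{L^2}^2\geq 0$; no differentiation of $A$ is needed at all. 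The first bracket is then $\leq 0$, while the second is $<0$ for small $t>0$ by the contradiction hypothesis, giving $\mathfrak{D}(u+t\varphi)<\mathfrak{D}(u)$. This sidesteps completely the question of whether $u^{-5}\varphi$ is integrable. Your approach trades this simplicity for the sharper quantitative inequality, which is a genuine gain if one wants the Euler--Lagrange inequality directly from minimality.
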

\begin{proof}
By contradiction, assume that there exists $\varphi\in C^1(M), \varphi\geq 0$ such that
\[\int_M \left\{a\langle\nabla u, \nabla\varphi \rangle + c  u \varphi \right\} dV < 0\,. \]
For every $v\in H^1(M)$ define
\[Q(v):=\int_M \left\{a|\nabla v|^2 + c v^2  \right\}dV\,.\]
Note that
$$Q(v) \geq \lambda_1\|v\|_{L^2}^2\,.$$
Take any $t\in \mathbb R$ with $|t|$ small enough. We have that
\begin{equation*}
\begin{aligned}
&\mathfrak D(u+t\varphi)-\mathfrak D(u)\\
&=\left(\int_M (u+t\varphi)^{-4} dV \right)^{\frac 1 2}Q(u+t\varphi)-\left(\int_M u^{-4} dV\right)^{\frac 1 2}Q(u)\\
&=\left[\left(\int_M (u+t\varphi)^{-4} dV \right)^{\frac 1 2}- \left(\int_M u^{-4} dV\right)^{\frac 1 2}\right]Q(u+t\varphi)
\\
&+\left(\int_M u^{-4} dV\right)^{\frac 1 2}\left[Q(u+t\varphi)-Q(u)\right]\,.
\end{aligned}
\end{equation*}
Furthermore,
\[Q(u+t\varphi)\geq 0\,,\]
\[\left((u+t\varphi)^{-4} dV \right)^{\frac 1 2}-\left(u^{-4}dV \right)^{\frac 1 2}\quad\textrm{for any}\;\; t>0\,,, \]
and
\[Q(u+t\varphi)-Q(u)=Q'(u)[\varphi]+ o(t)\quad \textrm{as}\;\; t\to 0,\]
where
\[Q'(u)[\varphi]=\int_M\{a\langle \nabla u, \nabla\varphi + c u \varphi \}dV >0\,.\]
Thus, for $t>0$ sufficiently small,
\[Q(u+t\varphi)-Q(u) \leq \left(\int_M u^{-4} dV \right)^{\frac 1 2}\left\{Q'(u)[\varphi]t + o(t) \right\}<0.\]
So,
\[\mathfrak D(u+t\varphi)<\mathfrak D(u)\]
with $u+t\varphi>0$ a.e., $u+t\varphi\in H(M)$. This is a contradiction, since
\[\mathfrak D(u)=\mathcal D\,.\]
 \end{proof}

\begin{cor}\label{cor3}
Suppose that $u\in H(M)$ and that $\mathfrak D(u)=\mathcal D>0$. Then $\operatorname{essinf}_M u >0$.
\end{cor}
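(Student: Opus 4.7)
The plan is to combine the previous three ingredients in a direct chain: first use Lemma \ref{lem2} to turn the minimality of $u$ into a differential inequality $Lu\geq 0$, then use Lemma \ref{lem1} to upgrade the hypothesis $\mathcal D>0$ into strict positivity of the first eigenvalue $\lambda_1$, and finally apply the strong maximum principle (Lemma \ref{l-smp}) to rule out degeneracy of $u$ near the minimum.

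More concretely, I would proceed as follows. Since $u\in H(M)$ we have $u\in H^1(M)$ and $u>0$ a.e., so in particular $u$ is not a.e.\ zero. Because $\mathfrak{D}(u)=\mathcal D>0$ and $u$ is admissible, Lemma \ref{lem2} directly yields the weak inequality
\[
\int_M\bigl\{a\,\langle\nabla u,\nabla\varphi\rangle + c\,u\,\varphi\bigr\}\,dV \geq 0
\]
for every nonnegative $\varphi\in C^1(M)$, i.e.\ $Lu\geq 0$ in the weak sense. To invoke Lemma \ref{l-smp} I still need $\lambda_1>0$; this is where the second inequality of Lemma \ref{lem1} is used. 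Since the first eigenfunction $\varphi_1$ is smooth and strictly positive on the compact manifold $M$, it is bounded away from zero, so $\int_M\varphi_1^{-4}\,dV<\infty$ and the constant
\[
\frac{\int_M\varphi_1^2\,dV}{\bigl(\int_M\varphi_1^{-4}\,dV\bigr)^{1/2}}
\]
is finite and positive. From $\mathcal D\leq \bigl(\int_M\varphi_1^2/\bigl(\int_M\varphi_1^{-4}\bigr)^{1/2}\bigr)\lambda_1$ and the assumption $\mathcal D>0$ I conclude $\lambda_1>0$.

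Having both $\lambda_1>0$ and $Lu\geq 0$ weakly, and knowing that $u$ is not a.e.\ zero (in fact strictly positive a.e.), Lemma \ref{l-smp} applies and gives $\operatorname{essinf}_M u>0$, which is exactly the claim. I do not anticipate any real obstacle here: once Lemma \ref{lem2}, Lemma \ref{l-smp}, and Lemma \ref{lem1} are in place, the corollary reduces to checking that $\varphi_1$ qualifies as a competitor in the bound of Lemma \ref{lem1} (a trivial compactness/regularity remark), so that $\mathcal D>0$ can be translated into $\lambda_1>0$.
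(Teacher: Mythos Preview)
Your argument is correct and essentially identical to the paper's: the paper's one-line proof invokes Lemmas \ref{l-smp} and \ref{lem2}, with the implication $\mathcal D>0\Rightarrow\lambda_1>0$ (needed to apply Lemma \ref{l-smp}) having already been recorded as an immediate consequence of Lemma \ref{lem1}. You have simply made that implicit step explicit.
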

\begin{proof}
The thesis follows from Lemmas \ref{l-smp} and \ref{lem2}.
\end{proof}

\begin{theorem}\label{thmexi}
There exists $v\in C^{\infty}(M), v>0$ such that
\[\mathfrak D(v)=\hat{\mathcal D}\,.\]
Moreover $v$ satisfies
\[Lu = \mathcal D \left(\int_M v^{-4} dV \right)^{\frac 3 2} v^{-5}\,.\]
\end{theorem}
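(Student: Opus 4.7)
The plan is to apply the direct method of the calculus of variations to minimize $\mathfrak D$ over the cone $H(M)$, and then upgrade the minimizer to a smooth positive classical solution of the Euler--Lagrange equation, using that it is automatically bounded away from zero.

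First I would take a minimizing sequence $\{u_n\}\subset H(M)$ with $\mathfrak D(u_n)\to\mathcal D$. Exploiting the scaling invariance $\mathfrak D(\alpha u)=\mathfrak D(u)$ for $\alpha>0$, I normalize so that $\int_M u_n^{-4}\,dV=1$, whence $\mathfrak D(u_n)=Q(u_n):=\int_M(a|\nabla u_n|^2+c u_n^2)\,dV$. Since $\lambda_1>0$ gives $Q(u)\geq\lambda_1\|u\|_{L^2}^2$, and since the Aubin reference metric ensures $a=\tfrac14|W_{g_0}|^2$ is uniformly positive on $M$, the sequence is bounded in $H^1(M)$. Extract a subsequence with $u_n\rightharpoonup v$ weakly in $H^1$, strongly in $L^2$, and pointwise a.e. Fatou's lemma applied to $u_n^{-4}\geq 0$ gives $\int_M v^{-4}\,dV\leq 1$; the mere finiteness of this integral forces $v>0$ a.e., so $v\in H(M)$. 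Weak lower semicontinuity of the gradient term plus strong $L^2$ continuity of the zeroth order term yields $Q(v)\leq\liminf_n Q(u_n)=\mathcal D$, so
\[
\mathfrak D(v)=\Bigl(\int_M v^{-4}\,dV\Bigr)^{\!1/2}Q(v)\leq Q(v)\leq\mathcal D,
\]
while $\mathfrak D(v)\geq\mathcal D$ by definition; therefore $\mathfrak D(v)=\mathcal D$.

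Once $v$ is a minimizer, Lemma \ref{lem2} yields $Lv\geq 0$ in the weak sense, and Corollary \ref{cor3} upgrades this to $\operatorname{essinf}_M v>0$. This strict positivity is decisive: it implies $v+t\varphi\in H(M)$ for every $\varphi\in C^\infty(M)$ and every sufficiently small $|t|$, so arbitrary signed variations are admissible. Differentiating $t\mapsto\mathfrak D(v+t\varphi)$ at $t=0$ and setting the derivative equal to zero reproduces the weak form of the Euler--Lagrange equation derived in Section \ref{sec-eul}, with the coupling constant $\lambda(v)=\mathcal D\bigl(\int_M v^{-4}\,dV\bigr)^{-3/2}$. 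Because $v^{-5}$ is bounded on $M$ and $L$ is uniformly elliptic with smooth coefficients, standard elliptic regularity and bootstrap then promote $v$ to a classical $C^\infty$ solution. The identity $\hat{\mathcal D}=\mathcal D$ follows at once: $\hat{\mathcal D}\leq\mathfrak D(v)=\mathcal D$ by the smoothness and positivity of $v$, while $\mathcal D\leq\hat{\mathcal D}$ since $C^\infty_+(M)\subset H(M)$.

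The step I expect to require the most care is ensuring that the weak limit actually belongs to $H(M)$: the nonlinearity $v^{-4}$ is not weakly continuous, and a priori nothing forbids $u_n$ from degenerating toward zero on a set of positive measure. The normalization $\int_M u_n^{-4}\,dV=1$ combined with Fatou's lemma is what rules out such concentration and makes the singular structure of the minimization problem tractable; the remainder of the argument is a fairly standard bootstrap, whose main subtlety is that signed variations are allowed only thanks to the strong maximum principle encoded in Corollary \ref{cor3}.
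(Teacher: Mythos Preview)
Your argument is essentially the paper's own proof in the nondegenerate case, and all the steps you outline---normalization, $H^1$ bound, Fatou, weak lower semicontinuity, strict positivity via Corollary~\ref{cor3}, Euler--Lagrange, bootstrap---match the paper's treatment line for line.

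There is, however, one genuine omission. The standing hypothesis in Section~\ref{sec-ex} is only $\lambda_1\geq 0$, not $\lambda_1>0$; by Lemma~\ref{lem1} the case $\lambda_1=0$ is exactly $\mathcal D=0$. Your direct-method argument breaks down there: from $Q(u_n)\to 0$ and $\lambda_1=0$ you get no $L^2$ bound on $u_n$, hence no $H^1$ compactness, and moreover Corollary~\ref{cor3} is stated only under the assumption $\mathcal D>0$. The paper handles this case separately and trivially at the very start of the proof: when $\lambda_1=0$ one simply takes $v=\varphi_1$, the first eigenfunction, which is smooth, strictly positive, satisfies $L\varphi_1=0$, and has $\mathfrak D(\varphi_1)=0=\mathcal D=\hat{\mathcal D}$. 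You should add this one-line case distinction at the beginning; with it your proof is complete and coincides with the paper's.
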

\begin{proof}
First we suppose that
\[{\mathcal D}=0\,.\]
By Lemma \ref{lem1}, $\lambda_1=0$. Moreover,
\[\mathfrak D(\varphi_1)=\frac{\int_M \varphi_1^2 dV}{\left(\varphi_1^{-4} dV\right)^{\frac 1 2}}\mathfrak R(\varphi_1)
=\frac{\lambda_1 \int_M \varphi_1^2 dV}{\left(\varphi_1^{-4} dV\right)^{\frac 1 2}}=0={\mathcal D}\,.\]
Since $\varphi_1\in C^{\infty}(M)$, we have that also $\hat{\mathcal D}=0$. Hence
\[\mathfrak D(\varphi_1)={\mathcal D}=\hat{\mathcal D}=0\,.\]
From now on we suppose that $\mathcal D>0.$ Let $\{v_n\}_{n\in \mathbb N}\subset H(M)$ be a sequence of functions such that
$\mathfrak D(v_n)\to \mathcal D$. Since the functional $\mathfrak D$ is scaling invariant, without loss of generality, we can assume that $\int_M v_n^{-4} dV=1$. Since $\mathcal D>0$, in view of Lemma \ref{lem1}, we have that $\lambda_1>0.$ In addition,
\[ \int_M\{ a|\nabla v_n|^2 + c v_n^2\} dV \geq \lambda_1 \int_M v_n^2 dV\,.\]
Clearly, for any $n\in \mathbb N$ sufficiently large,
\[ 0<\mathcal D \leq \mathfrak D(v_n) \leq \mathcal D+1\,.\]
Hence
\begin{equation*}
\begin{aligned}
\lambda_1\int_M v_n^2 dV & \\& \leq
 \int_M \{a|\nabla v_n|^2 + c v_n^2 \} dV \\
&=\left(v_n^{-4} dV \right)^{\frac 1 2}\int_M\{ a|\nabla v_n|^2 + c v_n^2\} dV \\
&=\mathfrak D(v_n)\leq \mathcal D+1\,.
\end{aligned}
\end{equation*}
So, $\{v_n\}$ is bounded in $L^2(M)$. Moreover,  for any $n\in \mathbb N$ sufficiently large,
\begin{equation}
\begin{aligned}
\min_M a \int_M |\nabla v_n|^2 dV &\\ & \leq
\int_M a|\nabla v_n|^2 dV = \mathfrak D(v_n) - \int_M c v_n^2 dV\\
& \leq \mathfrak D(v_n) + \|v \|_{L^{\infty}}\|v_n\|_{L^2}^2\\
& \leq \mathcal D +1 + \|v \|_{L^{\infty}}\frac{\mathcal D +1}{\lambda_1}\,.
\end{aligned}
\end{equation}
So, $\{\nabla v_n\}$ is bounded in  $L^2(M)$, and $\{v_n\}$ is bounded in $H^1(M)$. Consequently, there exist a subsequence of $\{v_n\}$, which will be still denoted by $\{v_n\}$, and a function $v\in H^1(M)$ such that
\[ v_n \mathop{\rightharpoonup}_{n\to \infty} v\quad \textrm{in}\;\; H^1(M)\,,\]
\[ v_n \mathop{\rightarrow}_{n\to \infty} v\quad \textrm{in}\;\; L^2(M)\,,\]
\[ v_n \mathop{\rightarrow}_{n\to \infty} v\quad \textrm{a.e. in}\;\;M\,.\]
Therefore,
\[v_n^{-4}\mathop{\rightarrow}_{n\to \infty} v^{-4}\quad \textrm{a.e. in}\;\;M\,;\]
here we have assumed that $v_n, v: M \to [0, +\infty]$ and $\frac 1{\infty}=0, \frac 1 0 =\infty.$
By Fatou's lemma,
\[\int_M v^{-4} dV = \int_M \mathop{\operatorname{liminf}}_{n\to\infty}  v_n^{-4} dV \leq \mathop{\operatorname{liminf}}_{n\to\infty} \int_M v_n^{-4} dV =1\,.\]
Thus
\[\int_M v^{-4} dV < +\infty\,.\]
This implies that $v>0$ a.e. in $M$. In fact, if $v=0$ in a set of positive measure, since $v\geq 0$, we get
$\int_M v^{-4} dV =\infty$, a contradiction. Hence
$v\in H^1(M), v<0$ a.e. in $M$, $\int_M v^{-4} dV\leq 1.$ Using the fact that $\displaystyle{v_n \mathop{\rightharpoonup}_{n\to \infty} v}$ in $H^1(M)$ and
$\displaystyle{v_n \mathop{\rightarrow}_{n\to \infty} v}$ in $L^2(M)$, we can infer that
\begin{equation*}
\begin{aligned}
\mathcal D\leq \mathfrak D(v)\\ &= \left(v^{-4} dV \right)^{\frac 1 2}\int_M\{ a|\nabla v|^2 + c v^2 \}dV \\
& \leq \int_M\{a|\nabla v|^2 + c v^2 \}dV \\
&\leq \mathop{\operatorname{liminf}}_{n\to\infty} \int_M \{a|\nabla v_n|^2 + c v_n^2 \}dV=
\mathop{\operatorname{liminf}}_{n\to\infty} \mathfrak D(v_n) =\mathcal D\,.
\end{aligned}
\end{equation*}
So,
\[\mathfrak D(v)=\mathcal D>0\,.\]
From Lemma \ref{l-smp} it follows that $\operatorname{essinf} v>0$. Take any $\varphi\in C^1(M)$. Since $\mathfrak D(v)=\mathcal D$,
we get
\[\frac{d}{dt} \left[\mathfrak D(v + t\varphi) \right]|_{t=0}=0\,.\]
Consequently, for any $\varphi\in C^1(M)$, we have
\[\int_M \{a \langle \nabla v, \nabla \varphi + c v \varphi\} d V = \int_M \mathcal D\left(v^{-4}\right)^{-\frac 3 2} dV \int_M \frac{\varphi}{v^5} dV\,.\]
Thus,
\[Lu = \mathcal D \left(\int_M v^{-4} dV \right)^{\frac 3 2} v^{-5}=:f\quad \textrm{weakly\, in}\;\; M\,. \]
Since $\operatorname{essinf} v>0$, we have that $f\in L^{\infty}(M)$. Therefore, by standard elliptic regularity theory,
$v\in C^{\infty}(M), v>0$ in $M$. We can therefore infer that
\[\hat{\mathcal D}\leq \mathfrak D(v)=\mathcal D\leq \hat{\mathcal D}.\]
Hence,
$$\mathfrak D(v)=\mathcal D\,.$$
This completes the proof.
\end{proof}

\begin{rem} From the proof of Theorem \ref{thmexi} we can deduce that  $\mathcal D=\hat{\mathcal D}$, so
\begin{equation}\label{eq1}
L v =  \hat{\mathcal D}\left(\int_M v^{-4} dV\right)^{-\frac 3 2}  v^{-5} \quad \textrm{in}\;\; M\,.
\end{equation}
\end{rem}

\subsection{Uniqueness}
Observe that equation \eqref{eq1} is scaling invariant, in the sense that if $u_1$ solves \eqref{eq1}, then $u_1:=\beta u_2$, with $\beta\in \mathbb R^+,$ satisfies
\[L u_2 =  \hat{\mathcal D}\left(\int_M u_2^{-4} \right)^{\frac 3 2} u_2^{-5} dV \quad \textrm{in}\;\; M\,.\]
Therefore, uniqueness for equation \eqref{eq1} does not hold. However, we have the following result.

\begin{theorem}\label{thmuni1}
Suppose that both $u_1$ and $u_2$ are solutions of equation \eqref{eq1} and that $u_1>0, u_2>0$ in $M$. Then there exists $\beta\in \mathbb R^+$  such that
\[ u_1=\beta u_2\quad \textrm{in}\;\; M\,.\]
\end{theorem}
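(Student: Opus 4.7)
The strategy is to reduce the problem, via scaling, to a setting where a Picone-type identity can be applied cleanly. First I would exploit the scaling invariance: if $v$ satisfies $L v = \hat{\mathcal{D}}\pa{\int_M v^{-4}\,dV}^{-3/2} v^{-5}$, then so does $\beta v$ for every $\beta > 0$, because both sides rescale by the common factor $\beta$. Choosing $\beta_i > 0$ appropriately, I may therefore reduce to the normalized case $\int_M u_i^{-4}\,dV = 1$, in which the equation becomes $Lu_i = \hat{\mathcal{D}}\, u_i^{-5}$ for $i = 1, 2$. The goal is then to prove $u_1 \equiv u_2$; the original statement follows by undoing the rescaling.

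The main tool is the pointwise Picone-type identity
\[
a\abs{\nabla u_1 - \tfrac{u_1}{u_2}\nabla u_2}^2 \,=\, a\abs{\nabla u_1}^2 - a\,\nabla u_2\cdot\nabla\!\pa{\tfrac{u_1^2}{u_2}} \,\geq\, 0\,,
\]
valid for smooth positive $u_1, u_2$, with equality if and only if $u_1/u_2$ is locally constant. Integrating, adding $\int_M c u_1^2\,dV$ to both sides, and invoking the self-adjointness of $L$ on the closed manifold $M$, I obtain
\[
\int_M u_1\cdot Lu_1\,dV \,\geq\, \int_M \tfrac{u_1^2}{u_2}\cdot Lu_2\,dV\,.
\]
Substituting $Lu_i = \hat{\mathcal{D}}\, u_i^{-5}$ yields $\hat{\mathcal{D}}\int_M u_1^{-4}\,dV \geq \hat{\mathcal{D}}\int_M u_1^2 u_2^{-6}\,dV$; swapping the roles of $u_1, u_2$ gives a symmetric inequality.

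Assuming $\hat{\mathcal{D}} > 0$ (the degenerate case $\hat{\mathcal{D}} = 0$ reduces via Lemma \ref{lem1} to $Lu_i = 0$, making both $u_i$ positive first eigenfunctions, hence proportional by the simplicity of the principal eigenvalue), I would set $r := u_1/u_2$ and $d\mu := u_2^{-4}\,dV$, so the two inequalities combine into
\[
\int_M\pa{r^{-4} + 1 - r^2 - r^{-6}}\,d\mu \,\geq\, 0\,.
\]
A direct factorization gives
\[
r^{-4} + 1 - r^2 - r^{-6} \,=\, -r^{-6}(r^2 - 1)^2(r^4 + r^2 + 1)\,,
\]
which is pointwise nonpositive and vanishes only at $r = 1$. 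The combined inequality therefore forces the integrand to be zero $\mu$-a.e., so $r\equiv 1$ by continuity of $u_1, u_2$, i.e., $u_1 = u_2$ after normalization. The most delicate point is justifying the Picone inequality with a sign-indefinite coefficient $c$, but my setup adds $c u_1^2$ symmetrically to both sides, so no sign control on $c$ is ever needed.
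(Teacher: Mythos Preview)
Your argument is correct, but it follows a genuinely different route from the paper's. Both proofs start with the same normalization step, reducing to $Lu_i = \hat{\mathcal{D}}\,u_i^{-5}$ with $\int_M u_i^{-4}\,dV = 1$, and both treat the degenerate case $\hat{\mathcal{D}}=0$ via the simplicity of $\lambda_1$. The divergence is in the case $\hat{\mathcal{D}}>0$: the paper uses a \emph{sliding} (pointwise comparison) argument --- choose the largest $\alpha$ with $\psi - \alpha w \geq 0$, evaluate $L(\psi-\alpha w)$ at a touching point $x_0$, and read off $\alpha \geq 1$ from the sign of $\hat{\mathcal{D}}(\psi^{-5}-\alpha w^{-5})$ there, then swap roles. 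Your approach is purely \emph{integral}: the Picone identity yields $\int u_1\,Lu_1 \geq \int (u_1^2/u_2)\,Lu_2$, and after symmetrizing and the clean factorization $r^{-4}+1-r^2-r^{-6} = -r^{-6}(r^2-1)^2(r^4+r^2+1)$ you force $r\equiv 1$. The paper's method is more elementary (just the pointwise maximum principle and no integration by parts), while yours avoids evaluating the operator at a single point and would extend more readily to weak solutions or to settings where pointwise arguments are delicate; it also makes transparent that no sign on $c$ is needed, exactly as you note.
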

\begin{proof}
Let
$$\mu:=\left(\int_M u_1^{-4} dV\right)^{\frac 1 4},\quad \gamma:=\left(\int_M u_2^{-4} dV\right)^{\frac 1 4}.$$
So, the functions
\[\psi:= \mu u_1, \quad w:=\gamma u_2\]
satisfy
\[\int_M \psi^{-4} dV=\int_M w^{-4} dV=1\,.\]
Then
\[ L \psi = \hat{\mathcal D} \psi^{-5} \quad \textrm{in}\;\; M\,,\]
\[ L w = \hat{\mathcal D} w^{-5} \quad \textrm{in}\;\; M\,.\]
We choose $\alpha>0$ such that
\[\psi -\alpha w\geq 0\quad \textrm{and}\;\; \min_M \{\psi-\alpha w \} =0\,.\]
Since $M$ is compact, we can find a minimum point $x_0\in M$ of the continuous function $\psi-\alpha w$, so that $\psi(x_0)=\alpha w(x_0)\,.$
First assume that $\hat{\mathcal D}>0$. We have that
\[L (\psi-\alpha w)=\hat{\mathcal D}(\psi^{-5}-\alpha w^{-5})\quad \textrm{in}\;\; M\,.\]
In particular, at $x_0$ we obtain
$$0\geq \hat{\mathcal D}(\psi^{-5}(x_0)-\alpha w^{-5}(x_0))=\frac{\hat{\mathcal D}(1-\alpha^6)}{\alpha^5 w^5(x_0)}\,.$$
This yields $\alpha\geq 1$, and so,
\[\psi\geq \alpha w\geq w\quad \textrm{in}\;\; M\,.\]
By repeating the same argument interchanging the role of $\psi$ and $w$, we get
\[w\geq \psi\quad \textrm{in}\;\; M\,.\]
Hence
\[\mu u_1=\psi=w=\gamma u_2\quad \textrm{in}\;\; M\,.\]
Thus, we obtain the thesis with $\beta=\frac{\gamma}{\mu}.$

Now, assume that $\hat{\mathcal D}=0.$ Since $\psi-\alpha w\geq 0$, if we take $M>\|c\|_{L^{\infty}}$, then we have
\[L(\psi-\alpha w) + M (\psi-\alpha w)\geq 0\quad \textrm{in}\;\; M,\]
and $\min_M\{\psi-\alpha w\}=0\,.$ Observe that
\[\lambda_1(L+M \operatorname{Id})= \lambda_1(L)+M=M>0.\]
Thus, by Lemma \ref{l-smp} applied to the operator $L+M Id$ for the function $\psi-\alpha w$, we obtain $\psi=\alpha w$.
Therefore,
\[u_1=\frac{\gamma}{\mu}\alpha u_2\quad \textrm{in}\;\; M\,.\]
The proof is now complete.
\end{proof}

\begin{rem}
The proof of Theorem \ref{thmuni1} in the case $\hat{\mathcal D}=0$ is equivalent to the proof that $\lambda_1(L)$ is simple.
\end{rem}

Consider equation
\begin{equation}\label{eq2}
L u = \mathfrak{D}(u)\left(\int_M u^{-4} dV \right)^{-\frac 3 2} u^{-5} \quad \textrm{in}\;\; M\,.
\end{equation}
Observe that $\mathfrak{D}(u)=\mathfrak D(\beta u)$ for any $\beta\in \mathbb R^+$. Furthermore, equation \eqref{eq2}
is scaling invariant as before, so uniqueness for equation \eqref{eq2} does not hold. However, we have the following result.

\begin{theorem}\label{thmuni2}
Suppose that both $u_1$ and $u_2$ are solutions to equation \eqref{eq2}, and that $u_1>0, u_2>0$ in $M$. Then there exists $\beta\in \mathbb R^+$  such that
\[ u_1=\beta u_2\quad \textrm{in}\;\; M\,.\]
\end{theorem}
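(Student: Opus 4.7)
The plan is to reduce Theorem \ref{thmuni2} to Theorem \ref{thmuni1}: the only difference between equations \eqref{eq1} and \eqref{eq2} is that the constant $\hat{\mathcal D}$ in \eqref{eq1} is replaced by $\mathfrak D(u)$ in \eqref{eq2}, which depends on the unknown itself. So the key point is to establish that any two positive solutions $u_1, u_2$ of \eqref{eq2} yield the same value of $\mathfrak D$; once this is done, both solve a common equation of the form \eqref{eq1} and Theorem \ref{thmuni1} closes the argument. To this end I would first normalize by setting $\mu_i := (\int_M u_i^{-4}\,dV)^{1/4}$ and $\psi_i := \mu_i u_i$, so that $\int_M \psi_i^{-4}\,dV = 1$; by the scale invariance of $\mathfrak D$, the values $D_i := \mathfrak D(\psi_i) = \mathfrak D(u_i)$ are preserved, and a short computation shows $L \psi_i = D_i \psi_i^{-5}$ on $M$.

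The core step is a symmetric comparison adapted from Theorem \ref{thmuni1}. Set $\alpha := \min_M(\psi_1/\psi_2)$ and $\beta := \min_M(\psi_2/\psi_1)$, both strictly positive by continuity and compactness. The non-negative function $\psi_1 - \alpha \psi_2$ attains its minimum $0$ at an interior point $x_0$, where its gradient vanishes and its Hessian is positive semi-definite; hence
\[
L(\psi_1 - \alpha\psi_2)(x_0) \;=\; -a(x_0)\,\Delta(\psi_1 - \alpha\psi_2)(x_0) \;\leq\; 0.
\]
Plugging in the equations for $\psi_1,\psi_2$ and using $\psi_1(x_0) = \alpha\psi_2(x_0)$, this rearranges to $D_1 \leq \alpha^6 D_2$. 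Exchanging the roles of $\psi_1$ and $\psi_2$ yields $D_2 \leq \beta^6 D_1$. In particular the same argument rules out the mixed case $D_1 = 0 < D_2$ (it would force $D_2 \leq 0$), so $D_1$ and $D_2$ are either both positive or both zero.

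In the positive case, multiplying the two bounds yields $(\alpha\beta)^6 \geq 1$; but by definition $\beta = 1/\max_M(\psi_1/\psi_2)$, so $\alpha\beta \leq 1$ with equality if and only if $\psi_1/\psi_2$ is constant, whence $\psi_1$ and $\psi_2$ are proportional. In the degenerate case $D_1 = D_2 = 0$, the equations reduce to $L\psi_i = 0$ with $\psi_i > 0$, which forces $\lambda_1 = 0$ and makes both $\psi_i$ positive first eigenfunctions, so proportionality follows from the simplicity of $\lambda_1$ noted in the remark after Theorem \ref{thmuni1}. In either case $u_1 = \beta u_2$ for a suitable $\beta \in \mathbb{R}^+$. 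The main subtlety is closing the loop: the two sharp one-sided bounds $D_1 \leq \alpha^6 D_2$ and $D_2 \leq \beta^6 D_1$ must be coupled with the elementary inequality $\alpha\beta \leq 1$, and it is precisely this triple interaction, peculiar to the $(-5)$-power of $\psi$, that forces constancy of $\psi_1/\psi_2$.
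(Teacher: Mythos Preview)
Your argument is correct, and it shares with the paper the same pointwise comparison at interior minima; however, the route you take is genuinely different from the paper's. The paper does \emph{not} first prove that $\mathfrak D(u_1)=\mathfrak D(u_2)$ and then invoke Theorem~\ref{thmuni1}. Instead, it exploits the scaling freedom more aggressively: rather than normalizing to $\int_M\psi_i^{-4}\,dV=1$ (which leaves the two equations with possibly different coefficients $D_i$), the paper rescales each $u_i$ so that the resulting functions $\psi,w$ both satisfy the \emph{same} equation $L\psi=\psi^{-5}$ and $Lw=w^{-5}$. At that point a single comparison in the style of Theorem~\ref{thmuni1} gives $\psi\equiv w$, and hence $u_1$ and $u_2$ are proportional. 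The mixed case $\mathfrak D(u_1)>0=\mathfrak D(u_2)$ is ruled out by the same one-sided minimum argument you use.

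What your approach buys is a self-contained argument that never needs to guess the correct normalization: you keep the two distinct coefficients $D_1,D_2$, derive the pair of inequalities $D_1\le\alpha^6 D_2$ and $D_2\le\beta^6 D_1$, and close via the elementary bound $\alpha\beta\le 1$ to force $\psi_1/\psi_2$ constant. What the paper's approach buys is brevity: once one sees that any positive solution of \eqref{eq2} can be rescaled to solve $Lv=v^{-5}$, uniqueness for \eqref{eq2} up to scaling is literally uniqueness for that single equation, and Theorem~\ref{thmuni1} applies verbatim. A minor remark: you reuse the symbol $\beta$ both for $\min_M(\psi_2/\psi_1)$ and for the proportionality constant in the conclusion; these are different quantities and should be distinguished.
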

\begin{proof}
First assume that $\mathfrak D(u_1)>0, \mathfrak D(u_2)>0$. Let
$$\mu:=\left(\int_M u_1^{-4} dV\right)^{\frac 1 4}\mathfrak D(u_1),\quad \gamma:=\left(\int_M u_2^{-4} dV\right)^{\frac 1 4}\mathfrak D(u_2).$$
So, the functions
\[\psi:= \mu u_1, \quad w:=\gamma u_2\]
satisfy
\[ L \psi =  \psi^{-5} \quad \textrm{in}\;\; M\,,\]
\[ L w = w^{-5} \quad \textrm{in}\;\; M\,.\]
Hence the conclusion follows as in the proof of Theorem \ref{thmuni1}, when $\hat{\mathcal D}>0.$ Also, in the case $\mathfrak D(u_1)=\mathfrak D(u_2)=0,$ the thesis is obtained by the same arguments as in the proof of Theorem \ref{thmuni1}, when $\hat{\mathcal D}=0.$

We claim that the case $\mathfrak D(u_1)>0$ and $\mathfrak D(u_2)=0$ cannot happen. Indeed, by contradiction assume that $\mathfrak D(u_1)>0$ and $\mathfrak D(u_2)=0$. Define
\[\psi:= \mu u_1, \quad w:= u_2\,.\]
We choose $\alpha>0$ such that
\[\psi -\alpha w\geq 0\quad \textrm{and}\;\; \min_M \{\psi-\alpha w \} =0\,.\]
Since $M$ is compact, we can find a minimum point $x_0\in M$ of the continuous function $\psi-\alpha w$, so that $\psi(x_0)=\alpha w(x_0)\,.$
We have that
\[L (\psi-\alpha w)=\psi^{-5}\quad \textrm{in}\;\; M\,.\]
In particular, at $x_0$ we obtain
$$0\geq \psi^{-5}(x_0)>0\,.$$
This is a contradiction. The proof is now complete.
\end{proof}

\begin{cor}
Every critical point of the functional $u\mapsto \mathfrak D(u)$, defined in $H(M)$, is a minimum point.
\end{cor}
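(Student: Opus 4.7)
The plan is to reduce the statement to the uniqueness result Theorem \ref{thmuni2} combined with the existence of a smooth positive minimizer from Theorem \ref{thmexi} and the scale-invariance of $\mathfrak{D}$. First, let $u\in H(M)$ be a critical point of $\mathfrak{D}$. By the computation carried out in Section \ref{sec-eul}, $u$ is a weak solution of the Euler-Lagrange equation \eqref{eq2}, i.e.
\[
Lu = \mathfrak{D}(u)\left(\int_M u^{-4}\,dV\right)^{-\frac{3}{2}} u^{-5} \qquad \textrm{weakly in } M\,.
\]

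The next step is to upgrade $u$ to a smooth, strictly positive function. This is the step that most closely parallels what has already been done in this section: arguing exactly as in Lemma \ref{lem2} and Corollary \ref{cor3} (which only used that $u$ realises the infimum in order to get the one-sided inequality $Lu\geq 0$, a fact which holds a fortiori at a critical point), one obtains $\operatorname{essinf}_M u>0$. The right-hand side of \eqref{eq2} is then bounded, and standard elliptic regularity promotes $u$ to $C^{\infty}(M)$ with $u>0$ everywhere, just as in the final part of the proof of Theorem \ref{thmexi}.

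Now Theorem \ref{thmexi} provides a smooth positive function $v\in H(M)$ with $\mathfrak{D}(v)=\mathcal{D}=\hat{\mathcal{D}}$, which satisfies \eqref{eq2} as well. Since both $u$ and $v$ are smooth positive solutions of \eqref{eq2}, Theorem \ref{thmuni2} applies and yields $\beta\in\mathbb{R}^+$ such that $u=\beta v$ on $M$. By the scale-invariance of $\mathfrak{D}$ (which was used repeatedly in the proofs of Theorems \ref{thmexi} and \ref{thmuni2}) we conclude
\[
\mathfrak{D}(u)=\mathfrak{D}(\beta v)=\mathfrak{D}(v)=\mathcal{D}=\inf_{w\in H(M)}\mathfrak{D}(w)\,,
\]
so $u$ is a minimum point of $\mathfrak{D}$ on $H(M)$, as claimed.

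The only delicate point in this plan is the regularity/strict positivity step: the definition of $H(M)$ allows $u$ to vanish on a null set and to be only in $H^{1}(M)$, so one needs to check that a critical point (not just a minimizer) satisfies the one-sided inequality $Lu\geq 0$ required to invoke the strong maximum principle Lemma \ref{l-smp}. This however is immediate since at a critical point one has equality $Lu=\mathfrak{D}(u)(\int_M u^{-4}dV)^{-3/2}u^{-5}\geq 0$ weakly, so the rest of the argument goes through verbatim as in Corollary \ref{cor3} and the closing lines of Theorem \ref{thmexi}.
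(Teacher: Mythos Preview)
Your approach is essentially the paper's: a critical point solves \eqref{eq2}, the minimizer from Theorem \ref{thmexi} solves it too, Theorem \ref{thmuni2} forces them to be proportional, and scale-invariance finishes. You are in fact more careful than the paper in spelling out the regularity/positivity step before invoking Theorem \ref{thmuni2}.

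One small gap in that step: your appeal to Corollary \ref{cor3} (hence Lemma \ref{l-smp}) requires $\lambda_1>0$, which you have not verified. If $\lambda_1=0$ then $\mathcal D=0$ by Lemma \ref{lem1}, and either $\mathfrak D(u)=0$ (so $u$ is already a minimizer) or, testing the weak equation $Lu=c\,u^{-5}$ with $c>0$ against $\varphi_1$ and using $L\varphi_1=0$, one gets $0=\int c\,u^{-5}\varphi_1>0$, a contradiction. Alternatively, shift $L$ as in the proof of Theorem \ref{thmuni1}. Either way the case is easily disposed of, so the overall argument stands.
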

\begin{proof}
Let $w$ be a critical point of the functional $u\mapsto \mathfrak D(u)$.
Recall that this is equivalent to requiring that $\mathfrak D'(w)=0$, i.e. $w$ is a solution of equation \eqref{eq2}. By Theorem \ref{thmexi}, there exists a minimum point $v$ of the functional $u\mapsto \mathfrak D(u)$, which is a solution of equation \eqref{eq1}. By Theorem \ref{thmuni2} with $u_1=w$ and $u_2=v$ we can infer that $w=\beta v$, for some $\beta>0$. Then
$$\mathfrak D(w)=\mathfrak D(\beta v)=\mathfrak D(v)=\hat{\mathcal D}\,.$$
This is the thesis.
\end{proof}

\subsection{Further results}
For any $\beta>0$ consider equation
\begin{equation}\label{eq4}
L u = \beta u^{-5}\quad \textrm{in}\;\; M\,.
\end{equation}

Let
\[\underline{l}:=\min_M \varphi_1, \quad \overline{l}:=\max_M \varphi_1\,.\]

\begin{proposition}\label{prop1}
Assume that $\lambda_1>0$ and $\beta>0$. Then there exists a solution $u\in C^{\infty}(M)$ of equation \eqref{eq4} such that
\[ \frac{\underline l}{\overline l}\left( \frac{\beta}{\lambda_1}\right)^{\frac 1 6} \leq u \leq
\frac{\overline l}{\underline l}\left( \frac{\beta}{\lambda_1}\right)^{\frac 1 6}\quad \textrm{in}\;\; M\,.\]
Moreover, if $v>0$ is any solution of equation \eqref{eq4}, then $v=u$ in $M$\,.
\end{proposition}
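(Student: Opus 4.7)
\textbf{Proof plan for Proposition \ref{prop1}.} The strategy is to produce $u$ by the classical method of sub- and supersolutions, anchored on scalar multiples of the first eigenfunction $\varphi_1$ of $L$, and then to establish uniqueness by a pointwise maximum-principle comparison in the spirit of the proof of Theorem \ref{thmuni1}.

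First I would construct the sub/supersolution pair. Since $L\varphi_1 = \lambda_1\varphi_1$, for any $k>0$ a direct computation gives
\[
L(k\varphi_1) - \beta(k\varphi_1)^{-5} = k^{-5}\varphi_1^{-5}\bigl(k^{6}\lambda_1\varphi_1^{6} - \beta\bigr).
\]
Setting $A := \underline{l}^{-1}(\beta/\lambda_1)^{1/6}$ and $B := \overline{l}^{-1}(\beta/\lambda_1)^{1/6}$, and defining $U := A\varphi_1$ and $V := B\varphi_1$, the bounds $\underline{l}\le\varphi_1\le\overline{l}$ yield $LU \geq \beta U^{-5}$ and $LV\le\beta V^{-5}$, so $U$ is a supersolution and $V$ a subsolution. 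Moreover $B\le A$ gives $V\le U$ on $M$, and evaluating $U,V$ between their extrema produces exactly the two-sided bound
\[
\frac{\underline{l}}{\overline{l}}\left(\frac{\beta}{\lambda_1}\right)^{1/6} \le V \le U \le \frac{\overline{l}}{\underline{l}}\left(\frac{\beta}{\lambda_1}\right)^{1/6}.
\]

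For existence I would run a monotone iteration. Fix a shift $K > \max\{\|c\|_{L^\infty(M)},\, 5\beta(\min_M V)^{-6}\}$, so that $s\mapsto Ks + \beta s^{-5}$ is nondecreasing on $[\min_M V,\,\max_M U]$. Setting $u_0 := U$, define $u_{k+1}$ as the unique weak solution in $H^1(M)$ of
\[
(L + K\,\operatorname{Id})\,u_{k+1} = \beta u_k^{-5} + K u_k.
\]
This is well posed by Lax--Milgram since the first eigenvalue of $L + K\,\operatorname{Id}$ equals $\lambda_1+K>0$. A standard induction, combining Lemma \ref{l-wmp} applied to $L+K\,\operatorname{Id}$ with the monotonicity of the reaction term, would show $V \le u_{k+1}\le u_k\le U$. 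The pointwise monotone limit $u$ is then a weak solution of \eqref{eq4} with $V\le u\le U$. Since $u\ge V \ge (\underline{l}/\overline{l})(\beta/\lambda_1)^{1/6}>0$, the right-hand side $\beta u^{-5}$ is bounded and smooth in $u$, so elliptic bootstrap yields $u\in C^\infty(M)$.

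For uniqueness, let $v\in C^\infty(M)$ be any positive solution of \eqref{eq4}. By compactness of $M$ and positivity of $u,v$ there exists a largest $\alpha>0$ with $u\ge\alpha v$ on $M$; then $u-\alpha v$ attains its minimum value $0$ at some $x_0\in M$. At $x_0$ we have $\nabla(u-\alpha v)(x_0)=0$ and $\Delta(u-\alpha v)(x_0)\ge 0$, hence $L(u-\alpha v)(x_0)\le 0$. On the other hand,
\[
L(u-\alpha v) = \beta\bigl(u^{-5} - \alpha v^{-5}\bigr),
\]
and evaluating at $x_0$, where $u(x_0)=\alpha v(x_0)$, gives $\beta v(x_0)^{-5}\alpha^{-5}(1-\alpha^{6})\le 0$. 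Thus $\alpha\ge 1$, so $u\ge v$ on $M$; exchanging the roles of $u$ and $v$ yields $v\ge u$, hence $u=v$.

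\textbf{Main obstacle.} The delicate point is the choice of the shift $K$ in the iteration: one needs $K$ simultaneously large enough to dominate $\|c\|_{L^\infty(M)}$ and to enforce the monotonicity of $s\mapsto Ks + \beta s^{-5}$ on the interval $[\min_M V,\,\max_M U]$. The latter is possible only because the subsolution $V$ yields a strictly positive uniform lower bound on all iterates, taming the singularity of $s^{-5}$ at the origin; without this a priori lower bound the iteration scheme would break down.
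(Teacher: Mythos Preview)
Your proof is correct and follows essentially the same approach as the paper: the paper constructs the same ordered pair of sub- and supersolutions $\underline{\alpha}\varphi_1\le\overline{\alpha}\varphi_1$ with $\underline{\alpha}=\overline{l}^{-1}(\beta/\lambda_1)^{1/6}$ and $\overline{\alpha}=\underline{l}^{-1}(\beta/\lambda_1)^{1/6}$, then simply invokes ``the standard sub-- and supersolutions method'' (your monotone iteration with a shift is precisely an explicit implementation of this), and for uniqueness repeats verbatim the comparison argument of Theorem~\ref{thmuni1}. Your discussion of the shift $K$ is more detailed than what the paper provides, but it is not a different idea.
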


\begin{proof} Define
\[\underline u:= \underline \alpha \varphi_1, \quad \overline u:= \overline \alpha \varphi_1,\]
where $\underline \alpha, \overline \alpha$ are positive constants to be chosen.
It is easily seen that if $\underline \alpha\leq \frac{\beta^{\frac16}}{\overline l \lambda_1^{\frac 1 6}}$, then
$\underline u$ is a subsolution of equation \eqref{eq4}, that is
\[ L \underline u \leq \beta \underline u^{-5}\quad \textrm{in}\;\; M\,.\]
In fact,
\[L \underline u=\lambda_1 \underline \alpha \varphi_1\leq \beta \underline u^{-5}=\beta \underline \alpha^{-5} \varphi_1^{-5}\quad \textrm{in}\;\; M\,,\]
provided that $\underline \alpha\leq \frac{\beta^{\frac 16}}{\overline l \lambda_1^{\frac 1 6}}$.
It is similarly seen that if $\overline \alpha\geq \frac{\beta^{\frac 16}}{\underline l \lambda_1^{\frac 1 6}}$, then
$\overline u$ is a supersolution of equation \eqref{eq4}, that is
\[ L \overline u \geq \beta\, \overline u^{-5}\quad \textrm{in}\;\; M\,.\]
Clearly, $0<\underline \alpha \leq  \overline \alpha.$ Define
\[\mathcal L u:= -\diver\pa{a\nabla u }\,.\]
Hence equation \eqref{eq2} is equivalent to equation
\begin{equation}\label{eq5}
\mathcal L u = f(u)\quad \textrm{in}\;\; M\,,
\end{equation}
where $f(u):= - c u +  u^{-5}$. We have shown that $\underline u$ is a subsolution of equation \eqref{eq5}, while $\overline u$ is a supersolution of equation \eqref{eq5}. Moreover,
\[0< \underline{\alpha}\,\underline{l}\leq \underline u\leq \overline u\leq \overline\alpha\overline{l}\quad \textrm{in}\;\; M\,,\]
and $f\in C^1([\underline \alpha \,\underline{l}, \overline\alpha\overline{l}])$. Hence by the standard sub-- and supersolutions method, we can infer that there exists a weak solution to equation \eqref{eq5}, and hence to equation \eqref{eq4}, satisfying
\[ \underline u \leq u \leq \overline u\quad \textrm{in}\;\; M\,.\]
By standard regularity theory it follows that $u\in C^{\infty}(M).$ Moreover, by the same arguments as in the proof of Theorem \ref{thmuni1} when $\hat{\mathcal D}>0$ we can infer that
if $v>0$ is any solution of equation \eqref{eq4}, then $v=u$. This completes the proof.
\end{proof}

\begin{proposition}\label{prop2}
Suppose that $\hat{\mathcal D}>0$. Let $v$ be a solution of equation \eqref{eq1}. Then
\[ \frac{\underline l}{\overline l}\left( \frac{\hat{\mathcal D}}{\lambda_1}\right)^{\frac 1 6} \leq v \left(\int_M v^{-4} dV \right)^{\frac 1 4} \leq
\frac{\overline l}{\underline l}\left( \frac{\hat{\mathcal D}}{\lambda_1}\right)^{\frac 1 6}\quad \textrm{in}\;\; M\,. \]
\end{proposition}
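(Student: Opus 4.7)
The plan is to reduce the claim to \textbf{Proposition \ref{prop1}} via a scaling normalization. Given a solution $v>0$ of \eqref{eq1}, I would set
\[
\mu := \left(\int_M v^{-4}\,dV\right)^{\frac14},\qquad \psi := \mu\,v,
\]
so that the identity $\int_M \psi^{-4}\,dV = \mu^{-4}\int_M v^{-4}\,dV = 1$ is automatic. The key (routine) computation is to check that $\psi$ satisfies the unnormalized equation: since $L$ is linear and $v^{-5} = \mu^{5}\psi^{-5}$, while $\bigl(\int_M v^{-4}dV\bigr)^{-3/2} = \mu^{-6}$, we get
\[
L\psi \;=\; \mu\, L v \;=\; \mu\cdot\hat{\mathcal D}\,\mu^{-6}\,v^{-5} \;=\; \hat{\mathcal D}\,\mu^{-5}\,v^{-5} \;=\; \hat{\mathcal D}\,\psi^{-5}\quad\text{in }M.
\]

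Thus $\psi$ is a positive smooth solution of equation \eqref{eq4} with $\beta=\hat{\mathcal D}>0$. By the existence part of Proposition \ref{prop1}, there exists a solution $u$ of \eqref{eq4} enjoying the two-sided bound
\[
\frac{\underline l}{\overline l}\left(\frac{\hat{\mathcal D}}{\lambda_1}\right)^{\frac16} \;\leq\; u \;\leq\; \frac{\overline l}{\underline l}\left(\frac{\hat{\mathcal D}}{\lambda_1}\right)^{\frac16}\quad\text{in }M,
\]
and by the uniqueness part of the same proposition every positive solution of \eqref{eq4} coincides with $u$. Hence $\psi\equiv u$, and the above pointwise bound holds for $\psi$. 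Since by construction $\psi = v\bigl(\int_M v^{-4}dV\bigr)^{1/4}$, substituting yields exactly the desired estimate.

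There is no essential obstacle here: the hypothesis $\hat{\mathcal D}>0$ is used to guarantee (through Lemma \ref{lem1}) that $\lambda_1>0$, so that Proposition \ref{prop1} is indeed applicable, and the scaling invariance of $L$ together with the homogeneity $v\mapsto v^{-5}$ of the nonlinearity makes the reduction $v\rightsquigarrow \psi$ consistent. All the analytic work (sub/supersolutions, elliptic regularity, uniqueness via the maximum principle argument of Theorem \ref{thmuni1}) has already been carried out in Proposition \ref{prop1}; Proposition \ref{prop2} is thus a corollary obtained by an $L^4(v^{-4}dV)$ normalization.
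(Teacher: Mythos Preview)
Your proof is correct and follows essentially the same approach as the paper: both reduce to Proposition~\ref{prop1} by recognizing that (a rescaling of) $v$ solves equation~\eqref{eq4}. The only cosmetic difference is that you first normalize $\psi=\mu v$ so that $\beta=\hat{\mathcal D}$, whereas the paper applies Proposition~\ref{prop1} directly to $v$ with $\beta=\hat{\mathcal D}\bigl(\int_M v^{-4}\,dV\bigr)^{-3/2}$ and then simplifies $(\beta/\lambda_1)^{1/6}$; the two arguments are equivalent.
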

\begin{proof}
Let $v$ be a solution of equation \eqref{eq1}. So, $v$ is also a solution of equation \eqref{eq4} with $\beta=\hat{\mathcal D}\left(\int_M v^{-4} dV\right)^{-\frac 3 2}$. Hence, by Proposition \ref{prop1}, the thesis follows.
\end{proof}

\

\section{A quantitative improved Kato inequality}\label{sec-kato}

We recall that, given any tensor $T$, at every point where $|T|\neq 0$, one has the classical Kato inequality
$$
|\nabla T|^2 \geq |\nabla |T||^2 \,.
$$
It was proved by Gursky and Lebrun \cite{gurleb}, that on a four manifold $(M^4,g)$ with half harmonic Weyl metric, i.e. $\delta W^\pm \equiv 0$, there holds
$$
|\nabla W^{\pm}|^{2}\geq \frac{5}{3} \big|\nabla |W^{\pm}|\big|^{2}
$$
if $|W^\pm|\neq 0$. In this section we prove a new quantitative version of the classical Kato inequality for the Weyl tensors $W^\pm$.  In particular, we recover the sharp Kato inequality established in \cite{gurleb}.

\begin{lemma}\label{l-IKI} Let $(M^{4},g)$ be a four dimensional Riemannian manifold. Then at a point where $|W^{\pm}|\neq 0$ it holds
$$
|\nabla W^{\pm}|^{2}\geq k \big|\nabla |W^{\pm}|\big|^{2}-\frac{8(k-1)}{(5-3k)}|\delta W^{\pm}|^{2}
$$
for every $k\in\left[0,\frac{5}{3}\right)$. In particular, if $\delta W^{\pm}\equiv 0$, then at a point where $|W^{\pm}|\neq 0$, it holds
$$
|\nabla W^{\pm}|^{2}\geq \frac{5}{3} \big|\nabla |W^{\pm}|\big|^{2}\,.
$$
\end{lemma}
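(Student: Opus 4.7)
The plan is to reduce the inequality to a pointwise algebraic statement at any $x \in M$ with $|W^\pm|(x) \neq 0$, and to carry out the computation in the Derdzinski orthogonal frame $\omega^\pm, \eta^\pm, \theta^\pm$ of $\Lambda^\pm_x$ recalled in Section \ref{sec-pre}. In this frame, formula \eqref{eq-derder} yields an explicit decomposition of $\nabla W^\pm$ in terms of the eigenvalues $\lambda^\pm \leq \mu^\pm \leq \nu^\pm$ (satisfying the trace-free constraint $\lambda+\mu+\nu=0$), their differentials $d\lambda, d\mu, d\nu$, and the three auxiliary $1$-forms $a^\pm, b^\pm, c^\pm$. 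I will suppress the $\pm$ in what follows.

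First I would assemble the three main quantities in the same frame. Formula \eqref{eq-nqder} already provides
\[
\tfrac{1}{4}|\nabla W|^2 = |d\lambda|^2 + |d\mu|^2 + |d\nu|^2 + 2(\mu-\nu)^2|a|^2 + 2(\lambda-\nu)^2|b|^2 + 2(\lambda-\mu)^2|c|^2.
\]
From $|W|^2 = 4(\lambda^2 + \mu^2 + \nu^2)$, a consequence of \eqref{eq-derw} and \eqref{eq-ww}, direct differentiation gives
\[
|W|^2 \,\bigl|\nabla|W|\bigr|^2 = 16 \,\bigl| \lambda \, d\lambda + \mu \, d\mu + \nu \, d\nu \bigr|^2.
\]

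The main computational step is to express $|\delta W|^2$ in the same frame. Contracting \eqref{eq-derder} in the last two indices, and exploiting the quaternionic relations $\omega_{ip}\eta_{pj}=\theta_{ij}$, $\eta_{ip}\theta_{pj}=\omega_{ij}$, $\theta_{ip}\omega_{pj}=\eta_{ij}$ together with the orthogonality $\omega^\alpha_{ij}\omega^\beta_{ij}=4\delta^{\alpha\beta}$, I would write $\delta W$ explicitly and then compute its squared norm. Alternatively, one may combine the computation with Lemma \ref{l-urka} to trade $|\delta W|^2$ against a specific twisted contraction of $\nabla W$ with itself, cutting down the direct bookkeeping. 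The outcome is a quadratic form in the six $1$-forms $(d\lambda, d\mu, d\nu, a, b, c)$ with coefficients polynomial in $\lambda, \mu, \nu$; its cross terms couple each rotational form $a,b,c$ to differences of the eigenvalue differentials, and these cross terms precisely encode the obstruction to sharpening the classical Kato inequality.

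The final step is purely algebraic. Using $\lambda + \mu + \nu = 0$ (hence also $d\lambda + d\mu + d\nu = 0$) to eliminate $\nu$ and $d\nu$, the target inequality becomes a quadratic inequality in the remaining free variables. Optimizing first in the rotational forms $(a, b, c)$ and then applying Cauchy--Schwarz to the eigenvalue part identifies the extremal configuration: the threshold $k=5/3$ is exactly the value for which the eigenvalue-gradient part of $|\nabla W|^2$ fails to dominate $\bigl|\nabla|W|\bigr|^2$ on its own, and for $k \in [0, 5/3)$ the deficit can be absorbed into a positive multiple of $|\delta W|^2$. Tracking constants produces exactly the coefficient $-\frac{8(k-1)}{5-3k}$, which diverges as $k$ approaches $5/3$. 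Setting $\delta W \equiv 0$ then recovers the sharp Gursky--LeBrun refined Kato inequality with constant $5/3$.

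The main obstacle I expect is the bookkeeping in the second step: many signed cross terms produced by the quaternionic multiplication must be carefully tracked to obtain a workable expression for the quadratic form $|\delta W|^2$ in the Derdzinski frame. Once this is in hand, the optimization in the last step is routine.
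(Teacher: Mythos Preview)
Your plan is correct and follows essentially the same route as the paper: work at a point in the Derdzinski frame, express $|\nabla W^\pm|^2$, $|\nabla|W^\pm||^2$ and $|\delta W^\pm|^2$ as quadratic forms in $(d\lambda,d\mu,d\nu,a,b,c)$, use $\lambda+\mu+\nu=0$ to reduce variables, and then verify non-negativity of the combined quadratic form. Two small remarks on how the paper streamlines the steps you flag as delicate: first, rather than working with $a,b,c$ directly, it sets $\bar a=(\mu-\nu)a$, $\bar b=(\lambda-\nu)b$, $\bar c=(\lambda-\mu)c$ and then introduces $X_j=-\omega_{ij}\bar a_i$, $Y_j=\eta_{ij}\bar b_i$, $Z_j=-\theta_{ij}\bar c_i$, so that via the quaternionic identities all the cross terms in $|\delta W^\pm|^2$ become honest inner products among the five one-forms $d\lambda,d\nu,X,Y,Z$; this is exactly the bookkeeping device that tames the signed terms you are worried about. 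Second, for the final algebraic step the paper writes the three quantities as explicit $5\times5$ symmetric matrices in this basis, chooses $k_1=\tfrac{8(k-1)}{5-3k}$ so that the determinant of $\mathcal{M}_{|\nabla W|^2}+k_1\mathcal{M}_{|\delta W|^2}-k\,\mathcal{M}_{|\nabla|W||^2}$ vanishes, and then checks non-negativity by computing the leading principal minors. Your proposed two-stage optimization (first in the rotational forms, then Cauchy--Schwarz on the eigenvalue part) is the Schur-complement version of the same positivity check and would equally succeed.
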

\begin{rem}\label{rem127} 
As it will be clear from the proof, in the case $k=0$, the inequality holds on the whole $M$, even at points where $|W^\pm|=0$.
\end{rem}
\begin{proof}
We perform our computations for the self-dual case; first recall that (see equation \eqref{eq-nqder})
\begin{equation*}
\Vert\nabla W^{+}\Vert^{2} = |d\lambda^{+}|^{2}+|d\mu^{+}|^{2}+|d\nu^{+}|^{2}  +2(\mu^{+}-\nu^{+})^{2}|a^{+}|^{2}+2(\lambda^{+}-\nu^{+})^{2}|b^{+}|^{2}+ 2(\lambda^{+}-\mu^{+})^{2}|c^{+}|^{2}.
\end{equation*}
In the rest of the proof we omit the   ``$+$'' on $\lambda$, $\mu$, $\nu$, $\omega$, $\eta$, $\theta$ and $a$, $b$, $c$ for the sake of simplicity.
We set $\bar a := (\mu-\nu)a$, $\bar b := (\lambda -\nu)b$ and $\bar c := (\lambda-\mu)c$; we also define
$$
X_{j}:=-\omega_{ij}\bar a_{i},\quad Y_{j}:=\eta_{ij}\bar b_{i},\quad Z_{j}:=-\theta_{ij}\bar c_{i} \,.
$$
Then, from the quaternionic structure, we get
$$
|X|^{2}=|\bar a|^{2},\quad |Y|^{2}=|\bar b|^{2}, \quad |Z|^{2} = |\bar c|^{2}
$$
and
$$
\langle X,Y\rangle = - \theta_{ij} \bar b_{i}\bar a_{j},\quad \langle X,Z\rangle = - \eta_{ij}\bar c_{i}\bar a_{j}, \quad \langle Y,Z\rangle = - \omega_{ij}\bar c_{i}\bar b_{j} \,.
$$
Since $\lambda+\mu+\nu=0$, $\Vert W\Vert^{2}=\frac{1}{4}\abs{ W}^2$  and $\Vert\nabla W\Vert^{2}=\frac{1}{4}\abs{\nabla W}^2$ we have
\begin{equation}\label{eq_prelmodW}
\abs{\nabla W^{+}}^2 = 8\pa{|d\lambda|^{2}+\pair{d\lambda, d\nu}+|d\nu|^{2}  +|X|^{2}+|Y|^{2}+ |Z|^{2}}
\end{equation}
and
\begin{align*}
  \abs{\nabla\|W^{+}\|}^2 &= 2\abs{d\pa{\sqrt{\lambda^2+\nu^2+\lambda\nu}}}^2 \\ &=2\abs{\frac{1}{2\sqrt{\lambda^2+\nu^2+\lambda\nu}}\pa{2\lambda d\lambda+2\nu d\nu+\nu d\lambda+\lambda d\nu}}^2 \\ &=\frac{1}{2\pa{\lambda^2+\nu^2+\lambda\nu}}\abs{(2\lambda+\nu)d\lambda+(\lambda+2\nu)d\nu}^2.
\end{align*}
Thus
\begin{equation}\label{eq_ModWeylboh}
   \abs{\nabla\abs{W^{+}}}^2=\frac{2}{\lambda^2+\nu^2+\lambda\nu}\sq{(2\lambda+\nu)^2\abs{d\lambda}^2+(\lambda+2\nu)^2\abs{d\nu}^2+2\pa{2\lambda+\nu}\pa{\lambda+2\nu}d\lambda d\nu}.
\end{equation}

Now, since by \eqref{eq-derder} one has
\begin{align*}
  2W^{+}_{ijpt, t} &= \pa{\lambda_t\omega_{pt}+\bar{c}_t\eta_{pt}-\bar{b}_t\theta_{pt}}\omega_{ij} \\&+\pa{\bar{c}_t\omega_{pt}+\mu_t\eta_{pt}+\bar{a}_t\theta_{pt}}\eta_{ij}\\&+\pa{-\bar{b}_t\omega_{pt}+\bar{a}_t\eta_{pt}+\nu_t\theta_{pt}}\theta_{ij},
\end{align*}
we deduce, after some computations,
\begin{align*}
  \abs{\delta W^{+}}^2 &= \abs{\lambda_t\omega_{pt}+\bar{c}_t\eta_{pt}-\bar{b}_t\theta_{pt}}^2 \\ &+\abs{\bar{c}_t\omega_{pt}+\mu_t\eta_{pt}+\bar{a}_t\theta_{pt}}^2 \\&+\abs{-\bar{b}_t\omega_{pt}+\bar{a}_t\eta_{pt}+\nu_t\theta_{pt}}^2 \\&= \abs{d\lambda}^2 + \abs{d\mu}^2 + \abs{d\nu}^2 +2\abs{\bar{a}}^2+2\abs{\bar{b}}^2+2\abs{\bar{c}}^2 \\ &+2\omega_{st}\pa{-\bar{c}_tb_s + \mu_t\bar{a}_s +\bar{a}_t\nu_s} \\&-2\eta_{st}\pa{-\lambda_t\bar{b}_s + \bar{c}_t\bar{a}_s-\bar{b}_t\nu_s}\\&+2\theta_{st}\pa{\lambda_t\bar{c}_s + \bar{c}_t\mu_s - \bar{b}_t\bar{a}_s},
   \end{align*}
 and thus
 \begin{align}\label{eq_terzadivW}
 \abs{\delta W^{+}}^2
   &= 2|d\lambda|^{2}+2\pair{d\lambda, d\nu}+2|d\nu|^{2}  +2|X|^{2}+2|Y|^{2}+ 2|Z|^{2}\\ \nonumber &+2\pair{d\lambda, X} +2\pair{d\lambda, Y}-4\pair{d\lambda, Z}+4\pair{d\nu, X}-2\pair{d\nu, Y}-2\pair{d\nu, Z}\\ \nonumber &-2\pair{X, Y}-2\pair{X, Z}-2\pair{Y, Z}.
  \end{align}
With respect to the ``formal'' ordered basis $d\lambda$, $d\nu$, $X$, $Y$ and $Z$, we can express the three quantities  in equations \eqref{eq_prelmodW}, \eqref{eq_ModWeylboh} and \eqref{eq_terzadivW} as quadratic forms, with associated matrices given by, respectively,
\[
\mathcal{M}_{\abs{\nabla W^+}^2}=\left[\begin{array}{lllll}8&4&0&0&0\\4&8&0&0&0\\0&0&8&0&0\\0&0&0&8&0\\0&0&0&0&8\end{array}\right],
\]

\[
\mathcal{M}_{\abs{\nabla\abs{W^{+}}}^2}=\frac{2}{\lambda^2+\nu^2+\lambda\nu}\left[\begin{array}{ccccc}(2\lambda+\nu)^2&(2\lambda+\nu)(\lambda+2\nu)&0&0&0\\(2\lambda+\nu)(\lambda+2\nu)&(\lambda+2\nu)^2&0&0&0\\0&0&0&0&0\\0&0&0&0&0\\0&0&0&0&0\end{array}\right],
\]

\[
\mathcal{M}_{\abs{\delta W^{+}}^2}=\left[\begin{array}{ccccc}2&1&1&1&-2\\1&2&2&-1&-1\\1&2&2&-1&-1\\1&-1&-1&2&-1\\-2&-1&-1&-1&2\end{array}\right],
\]

Now we define the quantity $Q := \abs{\nabla W^+}^2 +k_1\abs{\delta W^{+}}^22-k_2\abs{\nabla\abs{W^{+}}}^2$, for some $k_1, k_2\in\mathds{R}$, with associated matrix $\mathcal{Q}= \mathcal{M}_{\abs{\nabla W^+}^2} +k_1\mathcal{M}_{\abs{\nabla\abs{W^{+}}}^2}-k_2\mathcal{M}_{\abs{\delta W^{+}}^2}$; we have then
\[
\mathcal{M}_{Q}=\left[\begin{array}{ccccc}8+2k_1-2k_2\frac{(2\lambda+\nu)^2}{\lambda^2+\nu^2+\lambda\nu}&4+k_1-2k_2\frac{(2\lambda+\nu)(\lambda+2\nu)}{\lambda^2+\nu^2+\lambda\nu}&k_1&k_1&-2k_1\\4+k_1-2k_2\frac{(2\lambda+\nu)(\lambda+2\nu)}{\lambda^2+\nu^2+\lambda\nu}&8+2k_1-2k_2\frac{(\lambda+2\nu)^2}{\lambda^2+\nu^2+\lambda\nu}&2k_1&-k_1&-k_1\\k_1&2k_1&8+2k_1&-k_1&-k_1\\k_1&-k_1&-k_1&8+2k_1&-k_1\\-2k_1&-k_1&-k_1&-k_1&8+2k_1\end{array}\right]
\]
A computation gives that $\operatorname{det}(\mathcal{M}_{Q})=384\pa{8+5k_1}\pa{8+5k_1-8k_2-3k_1k_2}$\,.
Let
$$
k:=k_{2} \quad\quad\text{and}\quad\quad k_{1}:=\frac{8(k-1)}{5-3k}\,.
$$
Thus $\operatorname{det}(\mathcal{M}_{Q})=0$. We claim that the matrix $\mathcal{M}_{Q}$ is non-negative definite. In fact, we can check that the principal minors of order $2,3$ and $4$ have determinants, respectively,
$$
\frac{144(3-k)(1-k)^{2}}{(5-3k)^{2}}\,,
$$
$$
\frac{384(1-k)^{2}\big((3+2k)\lambda^{2}+(3+k)\lambda\nu+(3+k)\nu^{2}\big)}{(5-3k)^{2}(\lambda^{2}+\lambda\nu+\nu^{2})}\,,
$$
and
$$
\frac{3072\, k (1-k)^{2}(2\lambda+\nu)^{2}}{(5-3k)^{2}(\lambda^{2}+\lambda\nu+\nu^{2})}\,.
$$
Since $k\in[0,\frac{5}{3})$, it is easy to see that all these quantities are nonnegative. Moreover, with similar computations, one verify that all the leading minors have non-negative determinants. Thus $\mathcal{M}_{Q}$ is non-negative definite and the inequality is proved.
\end{proof}

\

\section{Rigidity results: proof of Theorem \ref{t-rigidity} and Corollary \ref{c-asf}}\label{sec-rigidity}

In this section we prove Theorem \ref{t-rigidity} and Corollary \ref{c-asf}. Let $(M^{4},g)$ be a closed manifold of dimension four with positive Yamabe invariant, $\mathcal{Y}(M,[g])>0$. Assume that $(M^{4},g)$ is not anti-self-dual, i.e. $W^{+}\not\equiv 0$, and satisfies the pinching
\begin{equation}\label{asss}
\mathcal{W^{+}}(M, [g]) \leq \frac{\alpha^{2}}{6} \mathcal{Y}(M,[g])^{2}\,,
\end{equation}
for some $\alpha\in[0, \frac{5}{9}]$. Obiouvsly, if $\alpha=0$ we have a contradiction. Moreover, the case $\alpha=\frac59$ was already considered in \cite{gur2} (see also \cite{hebvau}). Hence we can assume $\alpha\in(0, \frac{5}{9})$. In order to prove Theorem \ref{t-rigidity}, we will show that
$$
\mathfrak{D}_{\alpha}^{+}(\gt)> 0 \quad \textrm{for every }\, \gt\in[g]\,.
$$
From Lemma \ref{l-intid} we have
\begin{align}\label{eq127}
\int_{M}|\nabla W^{+}|^{2}\,dV = \int_{M}\Big(4|\delta W^{+}|^{2}-\frac{1}{2}R|W^{+}|^{2}+3 W^{+}_{ijkl}W^{+}_{ijpq}W^{+}_{klpq}\Big)\,dV
\end{align}
On the other hand, since the following sharp inequality holds
\begin{equation}\label{eq999}
W^{+}_{ijkl}W^{+}_{ijpq}W^{+}_{klpq}\leq \frac{1}{\sqrt{6}}|W^{+}|^{3}\,,
\end{equation}
by H\"older inequality one has
\begin{align*}
\int_{M} W^{+}_{ijkl}W^{+}_{ijpq}W^{+}_{klpq}\,dV &\leq \frac{1}{\sqrt{6}} \int_{M} |W^{+}|^{3}\,dV \\
&\leq \frac{1}{\sqrt{6}}\left( \int_{M}|W^{+}|^{2}\,dV\right)^{\frac{1}{2}} \left( \int_{M}|W^{+}|^{4}\,dV\right)^{\frac{1}{2}}\,.
\end{align*}
Moreover, the equality case is attained if and only if $W^+$ has at most two different eigenvalues and $|W^{+}|$ is constant almost everywhere. In particular, in this case, since $W^{+}\not\equiv 0$, $|W^{+}|>0$ on $W^+$ has exactly two distinct eigenvalues on $M^{4}$. The Yamabe-Sobolev inequality applied to $u:=|W^{+}|$ yelds
\begin{align*}
\int_{M} W^{+}_{ijkl}W^{+}_{ijpq}W^{+}_{klpq}\,dV &\leq \frac{1}{\sqrt{6}\,\mathcal{Y}(M,[g])}\left( \int_{M}|W^{+}|^{2}\,dV\right)^{\frac{1}{2}}\left( 6 \int_{M}|\nabla |W^{+}||^{2}\,dV + \int_{M}R|W^{+}|^{2}\,dV \right)\\
&\leq \alpha \int_{M}|\nabla |W^{+}||^{2}\,dV + \frac{\alpha}{6} \int_{M}R|W^{+}|^{2}\,dV \,,
\end{align*} 
where in the last inequality we have used the assumption \eqref{asss}. Let 
$$
M_0:=\{x\in M: |W^+|(x)=0\}\,.
$$ 
Note that, in general, $\operatorname{Vol}(M_0)$ can be strictly positive (by a unique continuation principle, this is not the case when $\delta W^+\equiv 0$, see for instance \cite{gursky}). Then one has
\begin{align*}
\int_{M} W^{+}_{ijkl}W^{+}_{ijpq}W^{+}_{klpq}\,dV \leq \alpha \int_{M\setminus M_0}|\nabla |W^{+}||^{2}\,dV + \frac{\alpha}{6} \int_{M_0}R|W^{+}|^{2}\,dV \,.
\end{align*} 
Thus, the improved Kato inequality in Lemma \ref{l-IKI}, implies for every $k\in\left[0,\frac{5}{3}\right)$
\begin{align*}
\int_{M} W^{+}_{ijkl}W^{+}_{ijpq}W^{+}_{klpq}\,dV &\leq  \frac{\alpha}{k} \int_{M\setminus M_0}|\nabla W^{+}|^{2}\,dV + \frac{8\alpha(k-1)}{k(5-3k)} \int_{M\setminus M_0}|\delta W^{+}|^{2}\,dV \\
&\,+\frac{\alpha}{6} \int_{M\setminus M_0}R|W^{+}|^{2}\,dV \,.
\end{align*}
On the other hand, by Remark \ref{rem127}, on $M_0$ we have 
$$
|\nabla W^+|^2 \geq \frac{8}{5} |\delta W^+|^2 \,,
$$
hence
\begin{align*}
 \frac{\alpha}{k} \int_{M_0}|\nabla W^{+}|^{2}\,dV + \frac{8\alpha(k-1)}{k(5-3k)} \int_{M_0}|\delta W^{+}|^{2}\,dV+\frac{\alpha}{6} \int_{M_0}R|W^{+}|^{2}\,dV\geq 0  \,.
\end{align*}
Combining the above inequalities with \eqref{eq127}, we obtain
\begin{align*}
\frac{k-3\alpha}{k}\int_{M}|\nabla W^{+}|^{2}\,dV &\leq \frac{4k(5-3k)+24\alpha(k-1)}{k(5-3k)}\int_{M}|\delta W^{+}|^{2}-\frac{1-\alpha}{2}\int_{M}R|W^{+}|^{2}\,dV \,.
\end{align*}
Now choose $k=3\alpha$ and we get
$$
\int_{M}|\delta W^{+}|^{2}\,dV\geq\frac{5-9\alpha}{24}\int_{M}R|W^{+}|^{2}\,dV\,,
$$
i.e.
$$
\mathfrak{D}_{\alpha}^{+}(g)\geq 0\,.
$$
If $\mathfrak{D}_{\alpha}^{+}(g)> 0$, since all the assumptions are conformally invariant, this estimate holds for every metric in the conformal class $\gt\in[g]$ and the claim follows. On the other hand, suppose that $\mathfrak{D}_{\alpha}^{+}(g)= 0$. Then 
$$
\int_{M}|\delta W^{+}|^{2}\,dV=\frac{5-9\alpha}{24}\int_{M}R|W^{+}|^{2}\,dV\,.
$$
From a previous estimate, since $k=3\alpha$, we obtain
\begin{align*}
\int_{M} W^{+}_{ijkl}W^{+}_{ijpq}W^{+}_{klpq}\,dV &\leq  \frac{1}{3} \int_{M}|\nabla W^{+}|^{2}\,dV + \frac{8(3\alpha-1)}{3(5-9\alpha)} \int_{M}|\delta W^{+}|^{2}\,dV +\frac{\alpha}{6} \int_{M}R|W^{+}|^{2}\,dV\\
&=  \frac{1}{3} \int_{M}|\nabla W^{+}|^{2}\,dV + \left(\frac{3\alpha-1}{9}+\frac{\alpha}{6} \right)\int_{M}R|W^{+}|^{2}\,dV\\
&= \frac{1}{3} \int_{M}|\nabla W^{\pm}|^{2}\,dV + \frac{9\alpha-2}{18}\int_{M}R|W^{+}|^{2}\,dV\,.
\end{align*}
Thus
\begin{align*}
\int_{M}|\nabla W^{+}|^{2}\,dV \leq\left(\frac{5-9\alpha}{6}-\frac{1}{2}+\frac{9\alpha-2}{6}\right)  \int_M R|W^{+}|^{2}\,dV+\int_{M}|\nabla W^{+}|^{2}=\int_{M}|\nabla W^{+}|^{2}\,dV
\end{align*}
In particular we have equalities in the previous computations, so $|W^{+}|$ is a positive constant and the equality case in the Yamabe-Sobolev inequality gives that also the scalar curvature $R$ has to be a positive constant. Substituting in \eqref{eq127}, we obtain
\begin{align*}
\int_{M}|\nabla W^{+}|^{2}\,dV &=\left(\frac{5-9\alpha}{6}-\frac{1}{2}+\frac{\alpha}{2}\right)  \int_M R|W^{+}|^{2}\,dV=\frac{1-3\alpha}{3} \int_M R|W^{+}|^{2}\,dV\\
&=\left(\frac{1-3\alpha}{3}\right) \operatorname{Vol}(M) R |W^+|^2\,.
\end{align*}
This implies $\alpha\leq \frac13$. To conclude we use the fact that we have equality also in the Kato inequality in Lemma \ref{l-IKI} with $k=3\alpha$, i.e.
$$
|\nabla W^{+}|^{2}= \frac{8(1-3\alpha)}{(5-9\alpha)}|\delta W^{+}|^{2}
$$
on $M^4$, since $|W^+|>0$. First of all, by the equality in the algebraic estimate \eqref{eq999} we know that $W^+$ has exactly two distinct eigenvalues. Following the proof in Lemma \ref{l-IKI}, since $\operatorname{det}(W^{+})>0$, we can assume that $\mu=\lambda$ and $\nu=-2\lambda$. Thus $\bar{c}=0$ and $Z=0$. Substituting in \eqref{eq_prelmodW} and \eqref{eq_terzadivW}, we obtain
$$
\abs{\nabla W^{+}}^2 = 24|d\lambda|^{2}  +8|X|^{2}+ 8|Y|^{2}
$$
$$
\abs{\delta W^{+}}^2 = 6|d\lambda|^{2} +2|X|^{2}+ 2|Y|^{2}-6\pair{d\lambda, Y}+6\pair{d\lambda, Y}-2\pair{X, Y}\,.
$$
Thus
\begin{align*}
0=|\nabla W^{+}|^{2}- \frac{8(1-3\alpha)}{(5-9\alpha)}|\delta W^{+}|^{2} &=\frac{8}{5-9\alpha}\Big((9(1-\alpha)|d\lambda|^2+3(1-\alpha)|X|^2+3(1-\alpha)|Y|^2\\
&+6(1-3\alpha)\pair{d\lambda, X}-6(1-3\alpha)\pair{d\lambda, Y}+2(1-3\alpha)\pair{X, Y}\Big)\,.
\end{align*}
Following again the notation in Lemma \ref{l-IKI}, the associated matrix is given by
\[
\mathcal{M}=\frac{8}{5-9\alpha}\left[\begin{array}{lll}9(1-\alpha)&3(1-3\alpha)&-3(1-3\alpha)\\3(1-3\alpha)&3(1-\alpha)&(1-3\alpha)\\-3(1-3\alpha)&(1-3\alpha)&3(1-\alpha)\end{array}\right]\,.
\]
A computation shows that $\det(\mathcal{M})=288\alpha(2-3\alpha)$, which has to be zero. This is a contradiction, since $0<\alpha\leq\frac13$ and the proof of Theorem \ref{t-rigidity} is complete.

\

Corollary \ref{c-asf} follows from Theorem \ref{t-rigidity} using the lower bound for the Yamabe invariant proved in \cite{gur127} (see Lemma \ref{lem-gur}).

\

\

\begin{ackn}
\noindent The authors are members of the Gruppo Nazionale per l'Analisi Matematica, la Probabilit\`{a} e le loro Applicazioni (GNAMPA) of the Istituto Nazionale di Alta Matematica (INdAM). The first and the third authors are supported by the PRIN-2015KB9WPT Grant "Variational methods, with applications to problems in mathematical physics and geometry".
\end{ackn}

\

\

\bibliographystyle{abbrv}

\bibliography{WHW}

\

\end{document}